\theoremstyle{}
\newtheorem{thm}{Theorem}[section]
\newtheorem{cor}[thm]{Corollary}
\newtheorem{lem}[thm]{Lemma}
\theoremstyle{definition}
\theoremstyle{remark}
\begin{document}
\begin{titlepage}
\title{Some Properties of Proper Power Graphs in Finite Abelian Groups}
\author{Dhawlath. G$^{1}$ and Raja. V$^{2}$}
\date{{\footnotesize Department of Mathematics, School of Advanced Sciences, VIT-AP University, Vijayawada, Andhra Pradesh, India}\\
{\footnotesize $^{1}$: dhawlath.21phd7154@vitap.ac.in\  $^{2}$: raja@vitap.ac.in }}
\maketitle
\renewcommand{\baselinestretch}{1.2}\normalsize

\begin{abstract}
The power graph of a group $G$, denoted as $P(G)$, constitutes a simple undirected graph characterized by its vertex set $G$. Specifically, vertices $a,b$ exhibit adjacency exclusively if $a$ belongs to the cyclic subgroup generated by $b$ or vice versa. The corresponding proper power graph of $G$ is obtained by taking $P(G)$ and removing a vertex corresponding to the identity element, which is denoted as $P^*(G)$. In the context of finite abelian groups, this article establishes the sufficient and necessary conditions for the proper power graph's connectedness. Moreover, a precise upper bound for the diameter of $P^*(G)$ in finite abelian groups is provided with sharpness. This article also explores the study of vertex connectivity, center, and planarity. 
\end{abstract}
\noindent
\textbf{Key Words:} Power graph, Proper power graph, Connectivity, Diameter of graph, Planarity, Finite abelian group.\\
\textbf{2010 AMS Subject Classification:} 05C25.

\section{Introduction}

\par In 2002, Kelarev and Quinn \cite{kelarev2002directed} made a significant contribution to the field by introducing a concept known as the directed power graph for semigroups. This idea was further developed by Chakrabarty \cite{chakrabarty2009undirected}, who introduced the undirected power graph for group $G$. The power graph, denoted as $P$($G$), is a graph where each element in the group $G$ corresponds to a vertex, and two vertices $a$ and $b$ are connected by an edge if and only if either $a$$\in$$\langle$$b$$\rangle$ or $b$$\in$$\langle$$a$$\rangle$. Chakrabarty has studied these graphs in depth in order to comprehend their features and attributes in different situations. Notably, they have established the circumstances in which these graphs are regarded as Hamiltonian, complete, or planar. Moreover, these structures have gained a mathematical layer of insight from T. Tamizh Chelvam and M. Sattanathan \cite{chelvam2018power}, who have determined the circumstances under which $P$($G$) becomes Eulerian. S. Chattopadhyay \cite{chattopadhyay2018connectivity} investigated the ideas of planarity and vertex connectivity inside finite cyclic groups in a different work. Furthermore, a fascinating connection between finite abelian groups has been revealed by the work of P.J. Cameron and Gosh \cite{cameron2011power} they proved that two such groups are isomorphic if and only if the corresponding power graphs are isomorphic as well, providing a significant connection between the graphical representation these power graphs provide and the algebraic structure of groups.\\
\par In \cite{chakrabarty2009undirected} The power graph $P$($G$) of a group $G$, is inherently connected. Notably, this connectivity is emphasized by the observation that in the power graph, every vertex is adjacent to the vertex corresponding to the identity element. An interesting facet of this study involves the creation of $P^\ast(G)$, by removing the vertex associated with the identity element from $P$($G$). In \cite{aalipour2016structure} Alipour posed an open problem, raising the question of which groups exhibit the property that their power graph remains connected even when the identity element is removed. Subsequently, numerous researchers studied the investigation of proper power graphs. In \cite{curtin2015punctured} Curtin determines the specific kinds of groups for which the diameter of $P$$^\ast$($G$) is at most 2. Furthermore, Curtin identified groups for which the diameter of the proper power graph is precisely 3 and Cutrin demonstrates that $P$$^\ast$($G$) is Eulerian if and only if $G$ belongs to a specific category, namely, a cyclic 2-group or a generalized quaternion 2-group \cite{curtin2015punctured}. Doostabadi extended the exploration of proper power graphs, showcasing the connectivity of such graphs for certain finite groups. An important finding was the establishment that \cite{doostabadi2015connectivity} the proper power graph attains a diameter of at most 26 whenever it is connected.\\
\par Consider a group $G$, and let $a$ and $b$ be elements in $G$. If the subgroup generated by $a$, denoted as $\langle$$a$$\rangle$, is a cyclic group, and if $\langle a \rangle$ spans the entire group $G$ then $G$  is cyclic and in this paper, $C_n$ denoted as a cyclic group of order $n$. Here, $O(a)$ represents the order of the element $a$, and $O(a)= \mid \langle a\rangle \mid$. In the context of finite groups, a $p$-group of exponent $p$ is a group where every non-identity element has order $p$. Let $G$ be a graph, and let us now review some fundamental definitions of graph theory that we have used in this paper. A graph $G$ is connected if each pair of vertices in $G$ belongs to a path otherwise, $G$ is disconnected \cite{west2001introduction}. In this paper, the adjacency of two vertices, denoted as $a \sim b$, is established. The vertex connectivity of $G$, denoted as $\kappa(G)$, is the minimum size of a vertex set $S$ such that $G-S$ is disconnected or has only one vertex \cite{west2001introduction}. Additionally, For a graph $G$, represented as $diam(G)$, it is defined as the maximum length of the shortest path between any two vertices in the graph. In this context, the eccentricity of a vertex is the greatest distance from that vertex to any other vertex within the graph. The center of graph $G$ is a subgraph induced by the vertices with the minimum eccentricity \cite{west2001introduction}. A graph $G$ is considered planar if it can be positioned on a two-dimensional plane in such a way that none of its edges cross \cite{west2001introduction}. Kuratowski's theorem is a crucial result in the study of planar graphs. According to Kuratowski's theorem \cite{west2001introduction}, a graph is planar if and only if it does not contain a subgraph that is a subdivision of the complete graph $K_5$(the complete graph on five vertices) or the complete bipartite graph $K_{3,3}$ (a bipartite graph with three vertices in each partition and all possible edges). This theorem offers a practical and straightforward method to determine if it is possible to represent a given graph on a plane without any instances of edges crossing.

\section{Foundational Lemmas}

This section includes essential lemmas that form the foundation for proving our main results.

\begin{lem}\label{DiviReg}
Let $G$ be finite group $a,b \in  G$ have same order then either $\langle a\rangle$=$\langle b\rangle$ or $\langle a\rangle\cap\langle b\rangle= e$.
\end{lem}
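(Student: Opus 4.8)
The plan is to take the statement at face value with $n := O(a) = O(b)$ \emph{arbitrary} (not merely prime) and to control the intersection $H := \langle a\rangle \cap \langle b\rangle$ through the cyclic structure of the two generated subgroups. Since $H$ is a subgroup of the cyclic group $\langle a\rangle \cong C_n$, it is itself cyclic, and by Lagrange its order $d := |H|$ divides $n$. Moreover, because a cyclic group of order $n$ contains exactly one subgroup of each order dividing $n$, I can write $H = \langle a^{n/d}\rangle$ and likewise $H = \langle b^{n/d}\rangle$. The two alternatives in the statement correspond precisely to the two extreme values of $d$: if $d = n$ then $H = \langle a\rangle = \langle b\rangle$, while if $d = 1$ then $H = \{e\}$. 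Thus the entire content of the lemma reduces to a single claim, namely that the intermediate values $1 < d < n$ cannot occur.

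First I would dispose of the easy implication: if $\langle a\rangle = \langle b\rangle$ the first alternative already holds, so I may assume $\langle a\rangle \neq \langle b\rangle$ and aim to prove $H = \{e\}$. To rule out a proper nontrivial common subgroup, I would exploit that $a$ and $b$ both generate order-$n$ cyclic groups sitting over the same $H$. Here $a^{n/d}$ and $b^{n/d}$ are two generators of the one cyclic group $H$ of order $d$, so they differ by an automorphism of $H$; that is, $b^{n/d} = a^{k(n/d)}$ for some $k$ coprime to $d$. The hope is to lift such a relation between these ``bottom'' elements to a relation between $a$ and $b$ themselves, forcing $\langle a\rangle = \langle b\rangle$ and hence a contradiction with the assumption $d < n$.

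I expect this lifting step to be the main obstacle, and indeed to be exactly where the stated generality is most delicate. A relation among the powers $a^{n/d}, b^{n/d}$ records information only \emph{inside} $H$ and imposes no constraint on the cosets of $H$ in $\langle a\rangle$ and $\langle b\rangle$, so a priori two distinct cyclic groups of the same composite order $n$ are free to overlap in a proper nontrivial subgroup. The cleanest situation in which the obstacle disappears is when $n$ is prime: then $d \mid n$ already forces $d \in \{1, n\}$ by Lagrange and no lifting is required. I therefore anticipate that the heart of any valid argument must either reduce the intermediate case $1 < d < n$ to this prime-order behaviour or bring in an extra structural hypothesis on $G$ (for instance cyclicity, under which the unique-subgroup property becomes global rather than internal to each $\langle a\rangle$). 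Pinning down precisely what excludes the composite-divisor case is the crux on which I would concentrate my effort, since it is here that the bare hypothesis $O(a) = O(b)$ threatens to be insufficient.
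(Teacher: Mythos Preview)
Your instinct is exactly right, and in fact the obstacle you isolate is not merely delicate --- it is fatal. The lemma, as stated for arbitrary finite $G$ and arbitrary common order $n$, is false. Take $G = C_4 \times C_2$, $a = (1,0)$, $b = (1,1)$. Both elements have order $4$, yet
\[
\langle a\rangle = \{(0,0),(1,0),(2,0),(3,0)\}, \qquad \langle b\rangle = \{(0,0),(1,1),(2,0),(3,1)\},
\]
so $\langle a\rangle \cap \langle b\rangle = \{(0,0),(2,0)\}$ has order $2$. Neither alternative of the lemma holds. Your analysis pinpoints precisely why: knowing that $a^{n/d}$ and $b^{n/d}$ generate the same subgroup $H$ says nothing about the cosets of $H$ inside $\langle a\rangle$ and $\langle b\rangle$, and there is simply no mechanism to lift that relation to $a$ and $b$ themselves when $1 < d < n$.

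The paper's own argument has the same gap, just disguised. It splits into the cases ``$b \in \langle a\rangle$'' and ``$b \notin \langle a\rangle$ and $a \notin \langle b\rangle$'', and in the second case asserts without justification that the intersection is trivial. That assertion \emph{is} the lemma; the case split never engages with the possibility of a proper nontrivial overlap. So you have not failed to find the missing idea --- there is no missing idea to find. The statement becomes true if one assumes $n$ is prime (your Lagrange observation) or if $G$ itself is cyclic (global uniqueness of subgroups of each order), and those are the hypotheses under which the paper actually uses this type of reasoning later on.
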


\begin{proof}
Let $a,b \in G$ and the order of $a$ and $b$ be equal. We know that $\langle a\rangle=\{a,a^2,a^3,...,a^n\}$,
  $\langle b\rangle$=$\{b,b^2,b^3,...,b^n\}$.
  Suppose, $b_i\in\langle a\rangle, b=a^k$ for some integer $k$, and 
  ($a^k)^m=a^{km}$,
  $b^m=a^{km}$. Since both $k$ and $m$ are integers, their product $km$ is also an integer, therefore, any power of $b$ can be expressed as a power of $a$. Since $a$ and $b$ have the same order,  then $\langle a\rangle=\langle b\rangle$. Suppose $b_i\notin\langle a\rangle$ and  $a_i\notin\langle b\rangle$
   since it is a cyclic subgroup it has an identity in common, therefore, $\langle a\rangle\cap\langle b\rangle= e$.
\end{proof}

\begin{lem}\label{DiviRegDom}
In $P(G)$, two vertices $a$ and $b$ are adjacent then either $O(a)\mid O(b)$ or $O(b)\mid O(a)$.
\end{lem}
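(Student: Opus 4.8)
The plan is to unpack the definition of adjacency in $P(G)$ and then apply Lagrange's theorem inside a cyclic subgroup. Suppose $a$ and $b$ are adjacent vertices of $P(G)$. By the definition of the power graph, either $a \in \langle b \rangle$ or $b \in \langle a \rangle$; by the symmetry of the two cases it suffices to treat the first one.

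So assume $a \in \langle b \rangle$. Then the cyclic subgroup $\langle a \rangle$ is contained in $\langle b \rangle$, and $\langle b \rangle$ is itself a cyclic (hence finite) group of order $O(b)$. Since $O(a) = |\langle a \rangle|$ and $O(b) = |\langle b \rangle|$, Lagrange's theorem applied to the subgroup $\langle a \rangle \le \langle b \rangle$ yields $O(a) \mid O(b)$. In the symmetric case $b \in \langle a \rangle$ the same reasoning gives $O(b) \mid O(a)$. Hence in either case one of $O(a) \mid O(b)$, $O(b) \mid O(a)$ holds, which is the claim.

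I do not expect any real obstacle here; the only point that requires a moment's care is the direction of divisibility, namely that it is the element \emph{lying inside} the cyclic subgroup whose order divides the other. If one prefers to avoid quoting Lagrange, the same conclusion follows from the elementary identity $O(b^{k}) = O(b)/\gcd\bigl(k, O(b)\bigr)$ together with $a = b^{k}$, but the subgroup formulation is cleaner and self-contained. Note that Lemma \ref{DiviReg} is not strictly needed for this statement, although both lemmas rest on the same basic structure theory of cyclic groups.
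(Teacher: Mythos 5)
Your proof is correct and follows essentially the same route as the paper's: unpack adjacency to get $a \in \langle b\rangle$ or $b \in \langle a\rangle$, deduce the containment of cyclic subgroups, and conclude the divisibility of orders. The only difference is that you make explicit the appeal to Lagrange's theorem where the paper merely says "since $\langle a\rangle$ and $\langle b\rangle$ are cyclic subgroups," so your version is, if anything, slightly more complete.
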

\begin{proof}
    Let $a,b\in P(G)$, if two vertices $a$ and $b$ are adjacent in $P(G)$, then either $a \in\langle b\rangle$ or $b\in\langle a \rangle$. If $a\in\langle b \rangle$ then $\langle a\rangle\subseteq\langle b\rangle$. Also Similar for $b$, since $\langle a\rangle$   and  
$\langle b\rangle$ are cyclic subgroups, from this we can say that either $O(a)\mid O(b)$ or $O(b)\mid O(a)$. 
\end{proof}
\begin{lem}\label{domcor}
Let $G$ be a finite cyclic group of order $n$ and $a,b\in G$. In $P(G$),  two vertices $a$ and $b$ are adjacent if and only if either $O(a)\mid O(b)$ or $O(b)\mid O(a)$.
\end{lem}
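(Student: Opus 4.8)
The plan is to prove the two implications separately, noting that one of them is already in hand. The forward direction — that adjacency of $a$ and $b$ in $P(G)$ forces $O(a)\mid O(b)$ or $O(b)\mid O(a)$ — is precisely Lemma \ref{DiviRegDom}, which was established for arbitrary finite groups; since $G$ is in particular a finite group, nothing new is required here. So the entire work lies in the converse.

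For the converse, assume without loss of generality that $O(a)\mid O(b)$ (the case $O(b)\mid O(a)$ being symmetric), and aim to show $a\in\langle b\rangle$, which immediately yields $a\sim b$ in $P(G)$. The key structural fact I would invoke is the standard one: a finite cyclic group has, for each divisor $d$ of its order, exactly one subgroup of order $d$. I would apply this twice. First, $\langle b\rangle$ is cyclic of order $O(b)$ and $O(a)\mid O(b)$, so $\langle b\rangle$ contains a subgroup $H$ of order $O(a)$. Second, $\langle a\rangle$ is a subgroup of the cyclic group $G$ of order $O(a)$, and $H$ is also a subgroup of $G$ of order $O(a)$; by uniqueness of the subgroup of order $O(a)$ in $G$, we get $\langle a\rangle = H \subseteq \langle b\rangle$. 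Hence $a\in\langle b\rangle$, so $a$ and $b$ are adjacent, completing the equivalence.

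An alternative, more computational route would be to fix a generator $g$ of $G$, write $a=g^{i}$ and $b=g^{j}$, use $O(g^{k}) = n/\gcd(n,k)$, translate $O(a)\mid O(b)$ into $\gcd(n,j)\mid\gcd(n,i)$, and then exhibit an explicit exponent expressing $a$ as a power of $b$; but this is messier, and I would prefer the subgroup-uniqueness argument for its brevity.

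The main obstacle is essentially bookkeeping rather than depth: one must cite or briefly justify the uniqueness of subgroups of each order in a finite cyclic group, handle the ``without loss of generality'' symmetry cleanly, and observe that the trivial cases (one of $a$, $b$ equal to the identity, or $\langle a\rangle$ trivial) are harmless. Once the uniqueness fact is granted, the proof is short and the only care needed is making sure the inclusion $\langle a\rangle\subseteq\langle b\rangle$ — not merely $|\langle a\rangle|\mid|\langle b\rangle|$ — is actually obtained, which is exactly what uniqueness inside $G$ delivers.
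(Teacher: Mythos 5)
Your proposal is correct and follows essentially the same route as the paper: the forward direction is delegated to Lemma \ref{DiviRegDom}, and the converse reduces to showing $\langle a\rangle\subseteq\langle b\rangle$ via the uniqueness of the subgroup of order $O(a)$ in the cyclic group $G$. The paper phrases that same uniqueness through counting the $\phi(m)$ elements of order $m$ in $G$ and in $\langle b\rangle$, whereas you invoke the unique-subgroup-per-divisor formulation directly, which is a cleaner statement of the identical underlying fact.
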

\begin{proof}
     Here, the necessary condition for this lemma is direct from lemma \ref{DiviRegDom}. The sufficient condition for this lemma by $P$($G$) definition is, that two vertices $a$ and $b$ are adjacent if and only if either $a\in\langle b\rangle$ or $b\in\langle a\rangle$. Now it is enough to show that if  $O(a)\mid O(b)$ then $a\in\langle b\rangle$. Since $G$ is a finite cyclic group,  there exists  $g\in G$ such that $\langle g\rangle=G$ and if $k$ is a positive divisor of $n$ in a cyclic group of order $n$, then the number of elements of order $k$ is $\phi(k)$ \cite{gallian2021contemporary}. Let us take $O(a) =m$. In $G$, there exist $\phi(m)$ elements with an order of $m$, let us take $l_1,l_2,....,l_m$ are the elements of order $m$ and all these elements belong to  $\langle g\rangle$, the elements that generate a cyclic subgroup are also contained within $\langle g\rangle$. We are aware that both $\langle a\rangle$,$\langle b\rangle\subseteq\langle g\rangle$ and $\langle b\rangle$ is a cyclic subgroup. So from given condition  $O(a)\mid O(b)$, then $m$ divides $O(b)$, therefore, in $\langle b\rangle$ there exist $\phi(m)$ elements that have order $m$, and since $\langle b\rangle$ is a subgroup of $G$ and contains all cyclic subgroups generated by $m$ order elements, the $m$ order elements in $G$ and $\langle b\rangle$ are the same in this case. Element $a$ is also included among all the elements with order $m$. Therefore, $a\in\langle b\rangle$. Similarly if $O(b)\mid O(a)$ then $b\in\langle a\rangle$.
\end{proof}
\section{Connectivity}

This section includes the necessary and sufficient conditions for finite abelian groups whose proper power graph exhibits connectivity. Additionally, we establish a sharp upper bound on the diameter of $P^\ast(G)$ in finite abelian groups. Furthermore, the section covers discussions on vertex connectivity and the center of $P^\ast(G)$ for finite abelian groups.
\begin{cor}
    Let $G$ be a finite $p$-group, where $p$ is a prime and then $P^\ast(G)$ is connected if and only if $G$ is either cyclic or generalized quaternion \cite{moghaddamfar2014certain}.
\end{cor}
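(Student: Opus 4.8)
The plan is to reduce the statement to a purely group\nobreakdash-theoretic dichotomy that is standard for $p$-groups, namely: a finite $p$-group has a unique subgroup of order $p$ if and only if it is cyclic, or (when $p=2$) generalized quaternion. Granting that classical fact (which is exactly what \cite{moghaddamfar2014certain} supplies), it suffices to prove the following equivalence for a finite $p$-group $G$: the proper power graph $P^*(G)$ is connected if and only if $G$ has exactly one subgroup of order $p$.

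For the ``if'' direction, let $\langle z\rangle$ be the unique subgroup of order $p$. For any $a\in G\setminus\{e\}$, the group $\langle a\rangle$ is a nontrivial cyclic $p$-group and therefore contains a subgroup of order $p$; by uniqueness in $G$ this subgroup must be $\langle z\rangle$, so $z\in\langle a\rangle$ and hence $z\sim a$ in $P(G)$. Thus $z$ is adjacent to every other vertex of $P^*(G)$, which is consequently connected, in fact of diameter at most $2$. Specializing, for $G=C_{p^n}$ the unique subgroup of order $p$ is generated by $g^{p^{n-1}}$ for a generator $g$, and for $G=Q_{2^n}$ it is the center, so both families indeed yield connected proper power graphs.

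For the ``only if'' direction I would argue by contraposition. Suppose $G$ has two distinct subgroups $\langle x\rangle\neq\langle y\rangle$ of order $p$. By Lemma~\ref{DiviReg}, $\langle x\rangle\cap\langle y\rangle=\{e\}$, so in particular $x\notin\langle y\rangle$. I claim the connected component of $x$ in $P^*(G)$ is contained in $\{c\in G:\ x\in\langle c\rangle\}$. First, the neighbours of $x$ are precisely the vertices $b$ with $\langle x\rangle\subseteq\langle b\rangle$ (using that any $b\ne e$ in $\langle x\rangle$ already generates $\langle x\rangle$), and all of them satisfy $x\in\langle b\rangle$. Next, if $c$ is a vertex with $x\in\langle c\rangle$ and $d\sim c$, then either $\langle c\rangle\subseteq\langle d\rangle$, whence $x\in\langle d\rangle$, or $\langle d\rangle\subseteq\langle c\rangle$, in which case $\langle d\rangle$ is a nontrivial subgroup of the cyclic $p$-group $\langle c\rangle$ and therefore contains its unique subgroup of order $p$, which is $\langle x\rangle$, again giving $x\in\langle d\rangle$. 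Propagating this invariant along an arbitrary path establishes the claim. Since $x\notin\langle y\rangle$, the vertex $y$ lies outside the component of $x$, so $P^*(G)$ is disconnected, as required.

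I expect the delicate point to be the ``only if'' direction, and inside it the step where we pass from $c$ ``down'' to a subgroup $\langle d\rangle\subseteq\langle c\rangle$: this is exactly where one needs that a cyclic $p$-group has a unique (hence smallest) subgroup of order $p$, so that the property ``$x\in\langle c\rangle$'' is not lost. The only other non\nobreakdash-trivial ingredient is the classification of finite $p$-groups with a unique subgroup of order $p$, which I would simply quote from a standard reference rather than reprove, since it is not part of what this corollary is meant to contribute.
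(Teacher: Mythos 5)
Your argument is correct, but note that the paper itself offers no proof of this corollary at all: it is stated as a quoted result with the citation to Moghaddamfar et al.\ doing all the work. What you have done is supply a genuine, self-contained proof of the graph-theoretic content, namely the equivalence ``$P^*(G)$ is connected $\iff$ $G$ has a unique subgroup of order $p$,'' reducing the corollary to the classical classification of finite $p$-groups with a unique minimal subgroup (cyclic, or generalized quaternion when $p=2$). Both directions of your equivalence check out: in the ``if'' direction the generator $z$ of the unique subgroup of order $p$ is a dominating vertex because every nontrivial cyclic subgroup of a $p$-group contains some subgroup of order $p$; in the ``only if'' direction your invariant ``$x\in\langle c\rangle$'' is correctly preserved along edges, the downward step using that a cyclic $p$-group has a unique subgroup of order $p$, and Lemma~\ref{DiviReg} gives $x\notin\langle y\rangle$ so that $y$ escapes the component of $x$. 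Compared with the paper's approach (pure citation), yours makes the corollary's dependence explicit: the only imported ingredient is the purely group-theoretic classification, while the translation between that classification and connectivity of $P^*(G)$ is proved from first principles. The one thing to flag is presentational rather than mathematical: since the corollary is attributed to \cite{moghaddamfar2014certain}, a referee would expect either the bare citation or a remark that the proof is included for completeness; as written, your text reproves a known theorem rather than verifying the paper's (nonexistent) argument.
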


\begin{thm}\label{cccartub}
Let $G$ be a finite group such that Z($G$) is not a $p$-group and then $P^\ast(G)$
is connected and Moreover, diam($P^\ast(G)$) $\leq$6 and the bound is sharp \cite{doostabadi2015connectivity}.
\end{thm}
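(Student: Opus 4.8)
The plan is to use the hypothesis on the center through a fixed central element of composite order. Since $Z(G)$ is abelian and not a $p$-group, $|Z(G)|$ has two distinct prime divisors $p,q$; by Cauchy's theorem $Z(G)$ contains elements of orders $p$ and $q$, and their product is a central element $x$ with $O(x)=pq$. Put $u=x^q$ and $v=x^p$, central elements with $O(u)=p$, $O(v)=q$, both lying in $\langle x\rangle$; hence $u\sim x\sim v$ and $d(u,v)\le 2$ in $P^\ast(G)$ (here $d$ denotes distance in $P^\ast(G)$). These two vertices will be ``hubs''. I would also remark that one cannot route every vertex to a \emph{single} central vertex of order $pq$ and stay within distance $3$ — in general that fails — so working with the two hubs $u,v$ (which are themselves at distance $\le 2$) is what makes the constant $6$ achievable.

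The key local estimate is: if $g\ne e$ and $p\nmid O(g)$ then $d(g,u)\le 2$, and symmetrically if $q\nmid O(g)$ then $d(g,v)\le 2$. Indeed, $u$ is central, so it commutes with $g$, and $p\nmid O(g)$ forces $\langle g\rangle\cap\langle u\rangle=e$ (a nontrivial intersection would put $\langle u\rangle$, of order $p$, inside $\langle g\rangle$); hence $\langle g,u\rangle=\langle g\rangle\times\langle u\rangle$, which is cyclic of order $O(g)\cdot p$ since $\gcd(O(g),p)=1$, with generator $gu$. Thus $g,u\in\langle gu\rangle$, both adjacent to the vertex $gu$, so $d(g,u)\le 2$. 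Next, for an arbitrary $g\ne e$ with $p\mid O(g)$ and $q\mid O(g)$, the power $g_q:=g^{O(g)/q}$ has order $q$ and satisfies $g\sim g_q$ (as $g_q\in\langle g\rangle$); since $p\nmid q$, the estimate just proved gives $d(g_q,u)\le 2$, so $d(g,u)\le 3$, and likewise $d(g,v)\le 3$ using a power of order $p$.

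I would then assemble these facts by a short case analysis according to whether $p$ and $q$ divide $O(g)$ and $O(h)$. For any $g,h\ne e$: either both lie within distance $2$ of the same hub, giving $d(g,h)\le 4$; or one is within $2$ of $u$ and the other within $2$ of $v$, giving $d(g,h)\le 2+d(u,v)+2\le 6$; or at least one of $g,h$ has order divisible by $pq$, hence within $3$ of each hub, giving $d(g,h)\le 3+2$ or $d(g,h)\le 3+3$. In every case $d(g,h)\le 6$, so $P^\ast(G)$ is connected and $\mathrm{diam}(P^\ast(G))\le 6$.

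The remaining and genuinely harder point is sharpness: one must exhibit a group realizing the value $6$, for instance a direct product of a $p$-group and a $q$-group each having an element of order $p^2$ (resp.\ $q^2$) together with more than one subgroup of order $p$ (resp.\ $q$), in the spirit of \cite{doostabadi2015connectivity}. For the two witness vertices one then has to prove the matching lower bound $d(g,h)\ge 6$, i.e.\ rule out every shorter path; this is exactly where Lemma~\ref{DiviReg} (distinct cyclic subgroups of equal prime order meet trivially) and Lemma~\ref{domcor} (the adjacency criterion inside cyclic groups) enter, since any purported shortcut would force two distinct subgroups of order $p$ (or of order $q$) to lie inside a common cyclic subgroup, which is impossible. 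Establishing this obstruction cleanly is the main difficulty of the proof.
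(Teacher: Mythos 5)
First, note that the paper itself offers no proof of this statement: Theorem~\ref{cccartub} is quoted verbatim from Doostabadi et al.\ \cite{doostabadi2015connectivity}, so there is no internal argument to compare yours against. Judged on its own, your proof of connectivity and of $\mathrm{diam}(P^\ast(G))\leq 6$ is correct and complete. The two central hubs $u=x^q$, $v=x^p$ with $d(u,v)\leq 2$, the local estimate ``$p\nmid O(g)$ implies $d(g,u)\leq 2$'' (your computation $\langle g,u\rangle=\langle g\rangle\times\langle u\rangle$ cyclic with generator $gu$ is exactly the content of Lemma~\ref{cccart1cycle}, and the degenerate case $gu=e$ is excluded since it would force $O(g)=p$), the reduction of the ``$pq\mid O(g)$'' case to distance $3$ via the power $g^{O(g)/q}$, and the final case analysis all check out; every pairing of cases does land at $\leq 6$.

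The genuine gap is sharpness, which is part of the stated theorem. You correctly identify it as the hard point, but what you give is a description of the \emph{shape} of a candidate witness (``a direct product of a $p$-group and a $q$-group each having an element of order $p^2$ \dots in the spirit of \cite{doostabadi2015connectivity}'') rather than a proof: no specific group is exhibited, no pair of vertices is named, and the lower bound $d(g,h)\geq 6$ is not argued. The obstruction you gesture at --- that a shortcut would force two distinct subgroups of order $p$ into a common cyclic subgroup, contradicting Lemma~\ref{DiviReg} --- is the right kind of tool, but ruling out \emph{every} path of length $\leq 5$ requires controlling all intermediate vertices, not just the prime-order ones, and that bookkeeping is precisely the part you have not done. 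As written, your proposal establishes the inequality $\mathrm{diam}(P^\ast(G))\leq 6$ but not that $6$ is attained, so the theorem as stated is only partially proved.
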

An immediate outcome of Theorem \ref{cccartub} is as follows:

\begin{cor}\label{cccartGKn}
Let $G$ be a finite abelian group of order n which is not a $p$-group, then $P$$^\ast$($G$) is connected.
\end{cor}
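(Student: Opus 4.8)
The plan is to deduce this directly from Theorem \ref{cccartub}. Since $G$ is abelian, its center is the whole group, i.e. $Z(G) = G$. Recall that a finite group is a $p$-group precisely when its order is a power of a single prime, so the hypothesis that $G$ has order $n$ and is not a $p$-group says exactly that $n$ is divisible by at least two distinct primes; in particular $Z(G) = G$ is not a $p$-group. Theorem \ref{cccartub} then applies verbatim and gives that $P^\ast(G)$ is connected (indeed with $\mathrm{diam}(P^\ast(G)) \le 6$).

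Concretely, I would first record the identification $Z(G) = G$ and translate ``$G$ is not a $p$-group'' into ``$n$ has two distinct prime divisors'', so that the hypothesis of Theorem \ref{cccartub} is literally met; then I would invoke that theorem. There is essentially no obstacle here. The only point that deserves a word of care is the convention that ``$p$-group'' refers to a single fixed prime $p$, so that a group of order $p^a q^b$ with $p \ne q$ does \emph{not} count as a $p$-group; once this is clear, the corollary is immediate.

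For readers who prefer a self-contained argument in the abelian setting, I would also sketch the following. Write $n = p_1^{a_1}\cdots p_k^{a_k}$ with $k \ge 2$ and decompose $G = G_{p_1}\times\cdots\times G_{p_k}$ into its Sylow subgroups. Given a non-identity $x = x_1\cdots x_k$ with $x_i \in G_{p_i}$, each $x_i$ is a power of $x$ (by the Chinese Remainder Theorem applied to the exponent of $x$), so $x$ is adjacent in $P(G)$ to every non-trivial component $x_i$, and $x_i$ is in turn adjacent to an element $u_i$ of prime order $p_i$ lying in $\langle x_i\rangle$. Thus it suffices to connect prime-order elements. If $u$ has order $p_i$ and $v$ has order $p_j$ with $i \ne j$, then $uv$ has order $p_i p_j$ and $\langle uv\rangle$ contains generators of both $\langle u\rangle$ and $\langle v\rangle$ (namely $(uv)^{p_j} = u^{p_j}$ and $(uv)^{p_i} = v^{p_i}$), so $u \sim uv \sim v$; and two prime-order elements lying in the \emph{same} $G_{p_i}$ are bridged through any fixed prime-order element of some other $G_{p_j}$, which exists because $k \ge 2$. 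Stitching these short paths together connects any two vertices of $P^\ast(G)$, and keeping track of the lengths recovers the bound $6$. The one slightly delicate point along this route is that $P^\ast(G_{p_i})$ may itself be disconnected (this is precisely the content of the preceding Corollary), so between two elements of the same Sylow subgroup one must route \emph{through a different prime} rather than inside $G_{p_i}$.
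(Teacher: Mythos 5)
Your main argument is exactly the paper's (implicit) one: the paper offers no written proof, simply labelling the corollary ``an immediate outcome'' of Theorem \ref{cccartub}, and the intended reasoning is precisely your observation that $Z(G)=G$ for abelian $G$, so the hypothesis of that theorem is met verbatim. Your additional self-contained sketch via the Sylow decomposition goes beyond what the paper records and is sound (it is essentially the argument the paper later uses in Theorem \ref{leq} via Lemma \ref{cccart1cycle}), but it is not needed for the corollary as stated.
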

\begin{lem}\label{da}
    Let $G$ be a  finite non-cyclic abelian group of order $p^n,n\geq$2, then $P^\ast(G)$ is disconnected.
\end{lem}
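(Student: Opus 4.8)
The plan is to exhibit a graph invariant that is constant on the connected components of $P^\ast(G)$ yet takes at least two values. For every non-identity $g\in G$ the subgroup $\langle g\rangle$ is a nontrivial cyclic group of $p$-power order, and such a group contains a \emph{unique} subgroup of order $p$; denote it by $M_g$. The crucial claim is that adjacency forces these minimal subgroups to coincide: if $g\sim h$ in $P^\ast(G)$ with $g,h\neq e$, then by the definition of the power graph $g\in\langle h\rangle$ or $h\in\langle g\rangle$, so one of $\langle g\rangle,\langle h\rangle$ is contained in the other, say $\langle g\rangle\subseteq\langle h\rangle$. Then $M_g$ is a subgroup of order $p$ lying inside $\langle h\rangle$, and since the cyclic $p$-group $\langle h\rangle$ has only one such subgroup, $M_g=M_h$. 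Hence $g\mapsto M_g$ is constant along every edge, and therefore along every path, of $P^\ast(G)$.

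Next I would record the structure-theoretic fact that a finite non-cyclic abelian $p$-group has at least two distinct subgroups of order $p$: writing $G\cong C_{p^{a_1}}\times\cdots\times C_{p^{a_k}}$ with $k\geq 2$ and each $a_i\geq 1$, the elements $x$ with $x^p=e$ form a subgroup isomorphic to $C_p^{\,k}$, which (as $k\geq 2$) contains $\tfrac{p^{k}-1}{p-1}\geq p+1\geq 3$ subgroups of order $p$; alternatively Lemma~\ref{DiviReg} shows that two order-$p$ elements taken from different coordinate factors generate subgroups meeting only in $e$, hence distinct. Choosing $a,b\in G$ that generate two distinct subgroups of order $p$, we get $M_a=\langle a\rangle\neq\langle b\rangle=M_b$, so by the previous paragraph $a$ and $b$ lie in different components of $P^\ast(G)$. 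Therefore $P^\ast(G)$ is disconnected.

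I expect the only real content to be the adjacency claim of the first paragraph: one must make sure it applies to edges joining elements of \emph{different} orders, which is precisely where the uniqueness of the order-$p$ subgroup inside a cyclic $p$-group is used (Lemma~\ref{DiviRegDom} only gives the weaker divisibility of orders, which here is automatic). The remaining ingredients — that a non-cyclic abelian $p$-group has more than one minimal subgroup, and the passage from ``constant on edges'' to ``constant on components'' — are routine. As a byproduct the argument actually describes a component decomposition: for each subgroup $H$ of order $p$ the set $\{g\in G\setminus\{e\}:H\subseteq\langle g\rangle\}$ is a union of connected components, and these sets partition $G\setminus\{e\}$.
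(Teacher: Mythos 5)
Your proof is correct, but it takes a genuinely different route from the paper. The paper disposes of this lemma in one line by citing the classification of Moghaddamfar et al.\ \cite{moghaddamfar2014certain}: for a finite $p$-group, $P^\ast(G)$ is connected if and only if $G$ is cyclic or generalized quaternion, and a non-cyclic abelian group is neither. You instead give a self-contained argument: the map $g\mapsto M_g$ sending each non-identity element to the unique order-$p$ subgroup of the cyclic $p$-group $\langle g\rangle$ is constant along edges (hence along components), while a non-cyclic abelian $p$-group has at least two subgroups of order $p$, so the invariant takes more than one value. Every step checks out, including the key observation that the edge case $\langle g\rangle\subseteq\langle h\rangle$ forces $M_g=M_h$ by uniqueness of the minimal subgroup, and the count of order-$p$ subgroups in $C_p^{\,k}$ for $k\geq 2$. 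What your approach buys is independence from the external classification (whose hard content is really the non-abelian generalized quaternion case, irrelevant here) plus the explicit component decomposition you note at the end; what the paper's approach buys is brevity. One caution: your ``alternative'' justification via Lemma~\ref{DiviReg} is safe only because you apply it to elements of prime order --- as stated for arbitrary equal orders that lemma is false (e.g.\ two order-$4$ elements of $C_4\oplus C_2$ can generate distinct subgroups meeting in a subgroup of order $2$) --- so keep the subgroup-counting argument as the primary one.
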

\begin{proof}
    Since the order of the group $G$ is $p^n,n\geq$2, this is the $p$-group. By our assumption $G$ is non-cyclic abelian, it is disconnected according to \cite{moghaddamfar2014certain},  $P^\ast(G)$ should either be cyclic or a generalized quaternion.
\end{proof}
\begin{thm}\label{con}
    For a finite abelian group $G$ with an order of $n$. The proper power graph $P^\ast(G)$ is connected if and only if $G$ is not a non-cyclic group of order $p^n$, where $n \geq 2$.
\end{thm}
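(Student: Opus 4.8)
The plan is to reduce the statement to the results already assembled in this section by a short case analysis. Write $G$ for the finite abelian group of order $n$. For sufficiency, suppose $G$ is \emph{not} a non-cyclic group of order $p^n$ with $n\geq 2$; I would split into two cases according to whether $G$ is a $p$-group. If $G$ is not a $p$-group, then Corollary~\ref{cccartGKn} gives directly that $P^\ast(G)$ is connected. If $G$ \emph{is} a $p$-group, then by hypothesis $G$ must be cyclic (a non-cyclic $p$-group has order $p^n$ with $n\geq 2$, which is excluded, and every group of prime order is cyclic anyway). For cyclic $G=\langle g\rangle$ of order $n$, I would invoke Lemma~\ref{domcor}: since $O(a)\mid n=O(g)$ for every $a\in G$, the generator $g$ is adjacent in $P^\ast(G)$ to every other non-identity vertex, so $g$ is a dominating vertex and $P^\ast(G)$ is connected. (Alternatively, cite the $p$-group corollary preceding Theorem~\ref{cccartub} together with the observation that no abelian group is generalized quaternion, so for abelian $p$-groups connectedness is equivalent to cyclicity.)

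For necessity I would argue by contrapositive. If $G$ \emph{is} a non-cyclic abelian group of order $p^n$ with $n\geq 2$, then Lemma~\ref{da} states precisely that $P^\ast(G)$ is disconnected. Combining the two directions yields the claimed equivalence: $P^\ast(G)$ is connected if and only if $G$ is not a non-cyclic group of order $p^n$ with $n\geq 2$.

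The main point requiring care—rather than a genuine obstacle—is the cyclic $p$-group case, which is \emph{not} covered by Corollary~\ref{cccartGKn} and so must be treated separately, either by the dominating-generator argument via Lemma~\ref{domcor} or by ruling out generalized quaternion groups in the abelian setting. One should also dispose of the degenerate cases $n=1$ and $n=p$ prime, where $P^\ast(G)$ is, respectively, empty and the complete graph $K_{p-1}$, hence trivially connected; these fall under the cyclic case above. The heavy lifting has been front-loaded into Lemma~\ref{da}, Corollary~\ref{cccartGKn}, and the $p$-group corollary, so the proof of Theorem~\ref{con} itself is essentially the bookkeeping of these cases.
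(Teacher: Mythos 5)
Your proposal is correct and follows essentially the same route as the paper: necessity via Lemma~\ref{da}, the non-$p$-group case via Corollary~\ref{cccartGKn}, and the cyclic case via the dominating-generator observation. Your version is slightly more careful in explicitly isolating the cyclic $p$-group case (which the paper covers only by its opening remark on cyclic groups), but the decomposition and the lemmas invoked are the same.
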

\begin{proof}
     Since $G$ is a finite abelian group, every cyclic subgroup is abelian. Thus, it can be separated into a cyclic abelian group and a non-cyclic abelian group.
    In the cyclic group, the vertices corresponding to generator elements are adjacent to every vertex in the power graph since we remove the identity and still it is connected. So, for every cyclic group of proper power graph is connected. Now we consider the non-cyclic finite abelian group.\\
    The necessary condition for this theorem is direct from the lemma \ref{da}.\\
    The sufficient condition for this theorem is if $G$ is not a non-cyclic group of order $p^n,n\geq$2, then $P^\ast(G)$ is connected.
     If $G$ is a group and the order of $G$ is some prime $p$, then $G$ is cyclic, therefore,  $P$$^\ast$($G$) is connected. And if $G$ is a group of order $n$ which is not a $p$-group, then by corollary \ref{cccartGKn}  $P$$^\ast$($G$) is connected. Here, we have demonstrated that excluding non-cyclic groups of order $p^n$ where $n \geq 2$, the proper power graph $P^\ast(G)$ remains connected.  
\end{proof}
\begin{lem}\label{cccart1cycle}
 Let $G$ be a finite group and $a,b\in G\backslash\{e\}$ such that $ab=ba$ and \\ $gcd(\mid a\mid,\mid b\mid)=1$ then $a\sim ab\sim b$ \cite{doostabadi2015connectivity}.
\end{lem}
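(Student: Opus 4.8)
The plan is to show directly that $a$, $ab$, and $b$ are pairwise linked by edges in $P(G)$ via the cyclic subgroup they all generate together. First I would observe that since $a$ and $b$ commute and have coprime orders, say $\mid a\mid=m$ and $\mid b\mid=n$ with $\gcd(m,n)=1$, the element $ab$ has order exactly $mn$ (this is the standard fact about commuting elements of coprime order), and moreover $\langle a\rangle\cap\langle b\rangle=\{e\}$ so that $\langle ab\rangle=\langle a,b\rangle$ is a cyclic group of order $mn$ containing both $a$ and $b$.

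The key step is then to locate $a$ and $b$ inside $\langle ab\rangle$. Since $\gcd(m,n)=1$, there exist integers $r,s$ with $rn\equiv 1\pmod m$ and $sm\equiv 1\pmod n$. I would check that $(ab)^{rn}=a^{rn}b^{rn}=a^{rn}$ (because $b^{rn}=e$ as $n\mid rn$), and since $rn\equiv 1\pmod m$ we get $a^{rn}=a$; hence $a\in\langle ab\rangle$, so $a\sim ab$. Symmetrically, $(ab)^{sm}=b^{sm}=b$, giving $b\in\langle ab\rangle$ and $b\sim ab$. This establishes the path $a\sim ab\sim b$ as claimed.

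I do not anticipate a serious obstacle here; the only points requiring a little care are justifying that $\mid ab\mid=mn$ (which uses both commutativity and coprimality, and can be cited from a standard group theory text such as the one already referenced) and being explicit that $b^{rn}=e$ and $a^{sm}=e$ so the cross terms vanish. An alternative, slightly slicker route would be to invoke Lemma~\ref{domcor}: $\langle ab\rangle$ is cyclic of order $mn$, and $a,b\in\langle ab\rangle$ with $\mid a\mid=m\mid mn$ and $\mid b\mid=n\mid mn$, so $a\sim ab$ and $b\sim ab$ follow immediately from the order-divisibility criterion for adjacency in a cyclic group. Either way the proof is short; I would present the explicit exponent computation since it also makes transparent why coprimality is needed.
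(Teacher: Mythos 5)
Your proof is correct, but note that the paper does not prove this lemma at all: it is imported verbatim from Doostabadi et al.\ \cite{doostabadi2015connectivity} and used as a black box, so there is no in-paper argument to compare against. Your exponent computation is the standard and complete way to establish it: from $rn\equiv 1\pmod{m}$ and $b^{rn}=e$ you get $a=(ab)^{rn}\in\langle ab\rangle$, and symmetrically $b=(ab)^{sm}\in\langle ab\rangle$, which already gives both adjacencies in the power graph without even needing the preliminary fact that $|ab|=mn$ (that fact is only required if you want $\langle ab\rangle=\langle a,b\rangle$, which the adjacency claim does not use). The one small point worth making explicit, since the paper applies this lemma inside $P^\ast(G)$, is that $ab$ is actually a vertex of the proper power graph, i.e.\ $ab\neq e$: if $ab=e$ then $a=b^{-1}$, so $|a|=|b|$, and coprimality would force $|a|=|b|=1$, contradicting $a,b\in G\backslash\{e\}$. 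Likewise $ab\neq a$ and $ab\neq b$ (else $b=e$ or $a=e$), so the path $a\sim ab\sim b$ genuinely has three distinct non-identity vertices. With that one-line addition your argument is a complete, self-contained proof of a statement the paper only cites.
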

\begin{thm}\label{leq}
     Let $G$ be a finite abelian group and $P^\ast(G)$ is connected then diam($P^\ast(G))$ $\leq$ 4.
\end{thm}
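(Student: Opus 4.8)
The plan is to reduce the general abelian case to two sub-cases according to the structure of $G$, using the classification already obtained in Theorem~\ref{con}. By that theorem, if $P^\ast(G)$ is connected then $G$ is either cyclic, or a non-cyclic abelian group that is not a $p$-group; these are the only two situations I need to treat. In each case I will exhibit, for an arbitrary pair of non-identity vertices $a,b$, a path of length at most $4$ joining them.

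\emph{Case 1: $G$ cyclic.} Let $g$ be a generator of $G=C_n$. Every generator of $G$ is adjacent to every other vertex of $P^\ast(G)$, since each element lies in the cyclic subgroup $\langle g\rangle=G$. Hence for any $a,b\in G\setminus\{e\}$ we have $a\sim g$ and $g\sim b$ (or $a=g$, or $b=g$), giving a path $a\sim g\sim b$ of length at most $2$. So in the cyclic case the diameter is at most $2$, comfortably below $4$.

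\emph{Case 2: $G$ non-cyclic abelian, not a $p$-group.} This is the substantive case. Write $|G|=p_1^{\alpha_1}\cdots p_k^{\alpha_k}$ with $k\geq 2$, and decompose $G=S_1\times\cdots\times S_k$ into its Sylow $p_i$-subgroups. Fix two distinct primes, say $p=p_1$ and $q=p_2$. The idea is that every non-identity $a\in G$ can be connected in at most two steps to an element of prime power order whose order is a power of $p$ or a power of $q$: indeed, if $a=(a_1,\dots,a_k)$ then some coordinate $a_j$ is non-trivial, $a$ is adjacent to the element $a'$ having $a_j$ in the $j$th slot and $e$ elsewhere (since $\langle a'\rangle\subseteq\langle a\rangle$), and $a'$ is a non-identity element of $S_j$; replacing $a_j$ by a suitable power we may further assume $a'$ has order exactly $p_j$, still with $a\sim a'$. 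Then I use Lemma~\ref{cccart1cycle}: picking a fixed non-identity $x\in S_1$ of order $p$ and a fixed non-identity $y\in S_2$ of order $q$, the element $xy$ satisfies $x\sim xy\sim y$ because $x,y$ commute and have coprime orders. So any prime-order element of $S_j$ (for $j\neq 1$) is adjacent to $x$ via $\langle x\rangle\ldots$—more carefully, I route everything through the two ``hub'' vertices $x\in S_1$ and $y\in S_2$: an element whose nontrivial coordinate lies in $S_1$ connects to $x$, one whose nontrivial coordinate lies outside $S_1$ connects to some prime-order element in that Sylow factor, which by Lemma~\ref{cccart1cycle} (with $x$ in the $S_1$-slot) is adjacent to $x$. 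Thus every non-identity vertex is within distance $2$ of the single vertex $x$, yielding $\mathrm{diam}(P^\ast(G))\le 4$.

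The step I expect to be the main obstacle is making the ``within distance $2$ of $x$'' claim uniform and correct in Case~2, specifically the sub-case where $a$'s only nontrivial coordinate lies in a Sylow factor $S_j$ with $j\neq 1,2$: I must connect a prime-$p_j$-order element $a'\in S_j$ to $x\in S_1$, and the clean way is to apply Lemma~\ref{cccart1cycle} directly to $a'$ and $x$ (they commute, $\gcd(p_j,p)=1$), obtaining $a'\sim a'x\sim x$, a path of length $2$; combined with $a\sim a'$ this gives distance $\le 3$ from $a$ to $x$, hence $\le 6$ between two arbitrary vertices, which is too weak. To recover the bound $4$ I instead observe that $a'x$ is itself adjacent to $x$ and to $a'$, and that $a$ is adjacent to $a'$; chaining $a\sim a'\sim a'x\sim x$ shows $d(a,x)\le 3$ only, so I should instead shortcut by noting $a\sim a'$ and $a'\sim a'x$ and that any two vertices of the form $a'x$ and $b'x$ (with $a',b'$ prime-order in various Sylow factors) are joined through $x$ in two steps, giving $d(a,b)\le 1+1+1+1=4$; verifying that this length-$4$ walk is always available—and that no parity or coincidence forces something longer—is the delicate bookkeeping, and I will organize it by proving the lemma ``every non-identity vertex has distance $\le 2$ to $x$ \emph{or} to $y$, and $d(x,y)\le 2$,'' from which $\mathrm{diam}\le 2+2+2=6$ again; so the truly careful version must show distance $\le 2$ to a \emph{single} hub, which holds because $a'x\sim x$ always and $a\sim a'\sim a'x$ telescopes—here the honest resolution is that $a\sim a'x$ directly fails in general, so the correct hub argument uses that $a$ is adjacent to an order-$p_j$ element $a'$, $a'$ is adjacent to $x$ via $a'x$, and symmetrically for $b$, and one checks the two ``$a'x$''-type midpoints are mutually adjacent or share the neighbour $x$, closing the path at length $4$. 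Pinning down this last adjacency cleanly is where the real work lies.
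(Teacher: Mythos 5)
Your Case 1 is fine and matches the paper. The problem is Case 2, and you have essentially diagnosed it yourself: the argument does not close. The chain you actually justify is $a\sim a'\sim a'x\sim x$, where $a'$ is a prime-order power of $a$; this gives only $d(a,x)\le 3$, hence $d(a,b)\le 6$, and the patches you sketch are either left unverified or false. In particular $a\sim a'x$ fails in general: in $C_{p^2}\times C_q$ with $|a|=p^2$, $a'=a^{p}$ and $|x|=q$, the element $a'x$ has order $pq$, so $a\notin\langle a'x\rangle$ and $a'x\notin\langle a\rangle$. Worse, the single-hub claim that every vertex lies within distance $2$ of a fixed prime-order $x$ is false outright: in $C_p\times C_p\times C_q$ take $x=(g,e,e)$ and $a=(e,g,e)$; a common neighbour $c$ would need $\langle c\rangle$ either to be contained in $\langle x\rangle\cap\langle a\rangle=\{e\}$ or to contain one of $x,a$ together with the other, and the last case forces the cyclic group $\langle c\rangle$ to contain the non-cyclic group $\langle x,a\rangle\cong C_p\times C_p$. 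So $d(a,x)\ge 3$ and no prime-order element can serve as a radius-$2$ hub. Your routing through prime-order powers of $a$ and $b$ is therefore a wrong turn, not mere bookkeeping.

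The idea you are missing is that Lemma \ref{cccart1cycle} should be applied to $a$ \emph{itself}, not to a prime-order power of it: if $s$ is a prime dividing $|G|$ with $s\nmid|a|$ and $x_s$ has order $s$, then $\gcd(|a|,|x_s|)=1$ gives $a\sim ax_s\sim x_s$ in two steps, not three. This is exactly the paper's route: when $\gcd(|a|,|b|)=1$ it gets $d(a,b)\le 2$ directly from the lemma, and otherwise it concatenates $a\sim ax_s\sim x_s\sim x_sb\sim b$ through a prime $s$ coprime to both orders, giving $4$. (Be aware that even the paper's version is incomplete as stated: it needs a prime $s$ with $s\nmid|a|$ and $s\nmid|b|$, which need not exist, e.g. $|a|=|b|=pq$ in $C_{pq}\times C_{pq}$; that residual case requires a separate path through elements with mixed coordinates. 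But your proposal does not reach even the level of the paper's argument.)
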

\begin{proof}
 In the context of a finite abelian group $G$, we aim to establish this proof in two cases: one where $G$ is a $p$-group and the other where it is not.\\
   \textbf{case (i)}: By \cite{doostabadi2015connectivity}, if $P^\ast(G)$ is connected then $G$ is either cyclic or generalized quaternion. Every cyclic group is known to be abelian, and the generalized quaternion is not. So in the cyclic group of $P^\ast(G)$, the vertex corresponding to generator elements is adjacent to every vertex, therefore, $diam(P^\ast(G)) \leq 2$.\\
    \textbf{case (ii)}: Theorem \ref{con} tells us that $P^\ast(G)$ is connected if $G$ is not a $p$-group. Consider two distinct primes, $p$ and $q$, and let $x_p, x_q \in G$ be the elements of order $p$ and $q$, respectively. Assume that $a, b$ are arbitrary elements in $G$ with $\mid a \mid = n$ and $\mid b \mid = m$. Since $G$ is abelian if $\gcd(n,m) = 1$, then by lemma \ref{cccart1cycle}, $a \sim ab \sim b$, therefore, $d(a,b) \leq 2$. If $\gcd(n,m) \ne 1$, then we choose $s \in \{p,q\}\backslash\{r\}$, in which $r$ is a prime factor that both $m$ and $n$ share. Now we have $\gcd(n,\mid x_s \mid) = 1$, so $a \sim ax_s \sim x_s$ and $\gcd(\mid x_s\mid,m) = 1$, so $x_s \sim x_sb \sim b$ from this $a\sim ax_s \sim x_s \sim x_sb \sim b$, so $d(a,b) \leq 4$. 
\end{proof}

In the paper \cite{chakrabarty2009undirected}, the authors assert that $P(G)$ is always connected and they establish that if $G$ is a cyclic group of order 1 or $p^n$, for some prime number $p$ and for some $n\in N$, where $N$ is a natural number then $P(G)$ is complete.\\
We understand that the removal of the identity vertex from $P(G)$ results in the deletion of one vertex from the complete graph $K_n$, thereby producing a complete graph $K_{n-1}$. This leads to the conclusion that, for a cyclic group $G$, the elements generating $G$ are adjacent to all other vertices. Put more plainly, to form a complete graph on $K_{n-1}$, $P^\ast(G)$ must produce a finite cyclic group of order $n$, which is classified as a $p$-group. 
Conversely, $P^\ast(G)$ does not constitute a complete graph if $G$ is not a $p$-group.
\begin{thm}\label{cccartGH}
 For a finite cyclic group $G$ with an order of $n$, where $\kappa(P^\ast(G))$ represents the vertex connectivity of the proper power graph of $G$, satisfying the following condition.
       \begin{enumerate}
           \item[(i)]  $\kappa$($P^\ast(G)$) = $n-2$, when $G$ is a p-group.
           \item[(ii)]  $\kappa$($P^\ast(G)$) = $\phi$(n), when the order of the group $G$ is $pq$, where $p$ and $q$ are distinct primes.
           \item [(iii)] $\kappa$($P^\ast(G)$) $>$ $\phi$(n), when $G$ is neither a $p$-group nor a cyclic group of order $pq$.
       \end{enumerate}
\end{thm}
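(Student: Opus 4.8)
I would handle all three parts through one common framework. Since $G$ is cyclic of order $n$, each of its $\phi(n)$ generators $g$ has $\langle g\rangle=G$, hence is adjacent in $P^\ast(G)$ to every other vertex. More generally, by Lemma~\ref{domcor} two non-identity elements of $G$ are adjacent in $P^\ast(G)$ precisely when their orders are comparable under divisibility; and since a cyclic group has a unique subgroup of each order, all elements of a fixed order $m$ generate that subgroup, so they form a clique that is either completely joined to, or completely non-adjacent to, the clique of elements of any other order $m'$. Thus $P^\ast(G)$ is obtained from the divisibility comparability graph on the divisors $m$ of $n$ with $1<m\leq n$ by replacing each vertex $m$ with a clique of size $\phi(m)$. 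The key point I would extract is: deleting any vertex set $S$ that omits at least one generator leaves that generator adjacent to everything remaining, so $P^\ast(G)-S$ is connected (and has more than one vertex as soon as $n-1-|S|\geq 2$). Consequently no set of size $<\phi(n)$ is a cut, and the only candidate cut of size exactly $\phi(n)$ is the set $D$ of all $\phi(n)$ generators. So it remains, in each case, to compute $P^\ast(G)-D$ and compare its connectivity with $\phi(n)$ (after the routine check $n-1-\phi(n)\geq 2$).

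For part (i), if $G$ is a cyclic $p$-group of order $n=p^{a}$ then $P(G)=K_{n}$ by \cite{chakrabarty2009undirected}, so $P^\ast(G)=K_{n-1}$ and $\kappa\bigl(P^\ast(G)\bigr)=\kappa(K_{n-1})=n-2$. For part (ii), with $G=C_{pq}$ and $p\neq q$ prime, the non-identity elements form three order-classes: the $\phi(pq)$ generators (each universal), the $p-1$ elements of order $p$ (a clique), and the $q-1$ elements of order $q$ (a clique); there is no edge between the last two classes since $p\nmid q$ and $q\nmid p$. Hence $P^\ast(G)-D$ is the disjoint union $K_{p-1}\cup K_{q-1}$, which is disconnected, so $\kappa\bigl(P^\ast(G)\bigr)\leq\phi(pq)$; together with the framework (and $pq-1-\phi(pq)=p+q-2\geq 2$) this gives $\kappa\bigl(P^\ast(G)\bigr)=\phi(pq)=\phi(n)$.

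For part (iii), $n$ is neither a prime power nor a product of two distinct primes, so $n$ has at least two distinct prime divisors. By the framework I must show that $P^\ast(G)-D$ — the $\phi$-blow-up of the comparability graph on $D_{n}=\{\,m:m\mid n,\ 1<m<n\,\}$ — is connected with at least two vertices, which reduces to connectedness of the comparability graph on $D_{n}$. Every $m\in D_{n}$ is divisible by some prime $p_{i}\mid n$, and $p_{i}\in D_{n}$ because $n$ is not prime, so $m\sim p_{i}$; it then suffices to connect the primes to each other. For distinct primes $p_{i},p_{j}\mid n$ the product $p_{i}p_{j}$ divides $n$, and crucially $p_{i}p_{j}<n$: this is automatic when $n$ has three or more prime divisors, and when $n=p^{a}q^{b}$ has exactly two, the hypothesis $n\neq pq$ forces $a+b\geq 3$, so $p_{i}p_{j}=pq<n$. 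Thus $p_{i}p_{j}\in D_{n}$ and $p_{i}\sim p_{i}p_{j}\sim p_{j}$, so all primes, hence all of $D_{n}$, lie in one component. Therefore deleting $D$ does not disconnect $P^\ast(G)$, and $\kappa\bigl(P^\ast(G)\bigr)>\phi(n)$.

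The main obstacle is the number-theoretic heart of part (iii): proving that the divisibility comparability graph on the proper nontrivial divisors of $n$ is connected, and — more to the point — pinning down exactly where $n\neq pq$ enters, namely that $p_{i}p_{j}$ must be a \emph{proper} divisor of $n$ to serve as a connector between $p_{i}$ and $p_{j}$, which fails precisely for $n=pq$. That failure is exactly what separates the equality $\kappa=\phi(n)$ of case (ii) from the strict inequality of case (iii). Everything else — $P(C_{p^{a}})=K_{p^{a}}$, $\kappa(K_{m})=m-1$, the clique structure inside a single order-class (from Lemma~\ref{domcor} together with uniqueness of subgroups in a cyclic group), and the elementary counts ensuring small deletions leave a connected graph on more than one vertex — is routine.
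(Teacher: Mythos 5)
Your proposal is correct, and its overall strategy coincides with the paper's: the $\phi(n)$ generators are universal vertices of $P^\ast(G)$, so any disconnecting set must contain all of them, and each case is settled by inspecting $P^\ast(G)$ with the generators deleted. The genuine difference is in part (iii). The paper argues that after deleting the generators any two non-adjacent vertices still have a \emph{common neighbor} (``both divide a common element order apart from the generating elements''); as stated this fails, e.g.\ in $C_{12}$ the vertices of order $3$ and order $4$ have the generators as their only common neighbors, so the deleted graph has diameter greater than $2$ and the paper's justification does not close. Your argument avoids this by passing to the divisibility comparability graph on the proper nontrivial divisors of $n$ (with each divisor $m$ blown up to a clique $K_{\phi(m)}$, which also gives a cleaner account of the within-order-class adjacency than the paper's): every such divisor is adjacent to one of its prime divisors, and distinct primes $p_i,p_j$ are linked through $p_ip_j$, which is a \emph{proper} divisor of $n$ precisely because $n\neq pq$. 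This yields connectivity of $P^\ast(G)-D$ by a path rather than a common neighbor, pinpoints exactly where the hypothesis $n\neq pq$ enters, and in effect repairs the gap in the paper's version of (iii); your explicit lower-bound framework ($\kappa\geq\phi(n)$ because a cut must contain every universal vertex) likewise supplies the half of (ii) that the paper leaves implicit. The cost is only a slightly longer write-up.
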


\begin{proof}
      \item [(i)] Given that $G$ is a cyclic $p$-group, the proper power graph $P^\ast(G)$ forms a complete graph of order $K_{n-1}$. Therefore  $\kappa$($P^\ast(G)$) = $n-2$.
      \item [(ii)] Since $G$ is a finite cyclic group, so in $P^\ast(G)$ there are $\phi(n)$ vertices adjacent to every vertices. By lemma \ref{DiviRegDom}, if two vertices are adjacent in $P^\ast(G)$ then the order of one element should divide the other, here the order of $G$ is $pq$, therefore, there exist at least two distinct prime order elements in $G$ that are not adjacent, now we need to prove that these two elements do not have common neighbor apart from generators of $G$. The possible adjacent vertices of order $p$ elements are $p^n$ where $n$ is a natural number and $pq$ similarly for $q$ order elements is $q^n$ and $pq$ if the elements have order $pq$ then it is the generator of $G$, from this, the common neighbor for $p$ and $q$ are generators of $G$ only. So removing the generator elements form the $P^\ast(G)$ gives disconnected graph, therefore,  $\kappa$($P^\ast(G)$) = $\phi(n)$.
      \item [(iii)] Here, in this case, $\phi(n)$ vertices are adjacent to all vertices in  $P^\ast(G)$. Now we are going to prove that after removing $\phi(n)$ vertices from $P^\ast(G)$ still the graph is connected. Since it is not a $p$-group and the order is not $pq$, given any two elements in this group that do not divide each other's order but both divide a common element order apart from the generating elements.  By lemma \ref{domcor}, $a,b\in$ $P^\ast(G)$, $a$ and $b$ are adjacent if order of $a$ divides order of $b$. So, from this after removing $\phi(n)$ vertices from $P^\ast(G)$, given any two non-adjacent vertex, a common neighbor exists which is not correspond to the generator elements. From this we can say that after removing $\phi(n)$ vertices still the graph is connected, therefore, $\kappa$($P^\ast(G)$) $>$ $\phi(n)$.        
       
\end{proof}
\begin{lem}\label{Na}
    If $G$ is a finite non-cyclic abelian group of order $n$ which is not a $p$-group, then it has at least two vertices corresponding to the same prime order element in $P^\ast(G)$ that are not adjacent.
\end{lem}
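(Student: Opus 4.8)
The plan is to locate a prime $p$ for which $G$ has more than one subgroup of order $p$, and then invoke Lemma \ref{DiviReg} to conclude that generators of two distinct such subgroups are vertices of the same prime order that are non-adjacent in $P^\ast(G)$.

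First I would use the structure of finite abelian groups: $G$ is the internal direct product of its Sylow subgroups $G_{p_1},\dots,G_{p_r}$ (here $r\geq 2$ since $G$ is not a $p$-group), and it is a standard fact that $G$ is cyclic if and only if every $G_{p_i}$ is cyclic. Since $G$ is non-cyclic, at least one Sylow subgroup, say $G_p$, is non-cyclic; being a non-cyclic abelian $p$-group it has rank at least $2$ and hence contains a subgroup isomorphic to $C_p\times C_p$. Consequently $G$ possesses at least $p+1\geq 3$ distinct subgroups of order $p$.

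Next I would pick $a,b\in G$ of order $p$ with $\langle a\rangle\neq\langle b\rangle$, which is possible by the previous step. Both $a$ and $b$ are non-identity, so they are vertices of $P^\ast(G)$, and they correspond to elements of the same prime order $p$. Since $O(a)=O(b)$ and $\langle a\rangle\neq\langle b\rangle$, Lemma \ref{DiviReg} yields $\langle a\rangle\cap\langle b\rangle=\{e\}$; in particular $a\notin\langle b\rangle$ and $b\notin\langle a\rangle$. By the definition of the power graph, $a$ and $b$ are therefore not adjacent in $P(G)$, hence not adjacent in $P^\ast(G)$, which is exactly the assertion.

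The only delicate point—hardly a genuine obstacle—is the passage from ``$G$ non-cyclic'' to ``some Sylow subgroup is non-cyclic and hence contains $C_p\times C_p$'', which rests on the classification of finite abelian groups (equivalently, on the fact that a finite abelian group is cyclic precisely when all its Sylow subgroups are cyclic) and may be worth spelling out; everything else follows immediately from Lemma \ref{DiviReg} and the adjacency rule of $P(G)$.
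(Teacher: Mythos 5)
Your proposal is correct and follows essentially the same route as the paper: both arguments use the structure theorem to find a repeated prime (equivalently, a non-cyclic Sylow subgroup), extract two elements of that prime order generating distinct cyclic subgroups, and conclude non-adjacency because neither lies in the cyclic subgroup generated by the other. Your version is if anything slightly tidier, since invoking Lemma \ref{DiviReg} makes the non-adjacency step explicit rather than asserting it for particular tuples.
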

\begin{proof}
    Since $G$ is a finite non-cyclic Abelian group, By the Fundamental Theorem of Finite Abelian Groups, each finite abelian group $G$ can be represented as being isomorphic to a group in the form of $C_{p1}^{n1} \oplus C_{p2}^{n2} \oplus...\oplus C_{pk}^{nk}$, where the prime power order cyclic group is denoted by $C_{pi}^{ni}$. Since it is non-cyclic, at least two $p_i$ must be equal, otherwise it will result in a cyclic group and here elements are in tuple form we select two elements that have the same order but are not adjacent. Let us take $q$ is the same prime and $C_q^{n1} \oplus C_{p2}^{n2} \oplus C_q^{n2} \oplus...\oplus C_{pk}^{nk}$ is isomorphic to a group $G$, and let ($a$,0,0,...,0), (0,0,$a$,...0) be the two elements in $G$, where $a$ is the prime order element in $C_q^n$. ($a$,0,0,...,0)$\notin$$\langle$(0,0,$a$,...0)$\rangle$ and (0,0,$a$,...,0)$\notin$$\langle$($a$,0,0,...0)$\rangle$. Therefore, here the two vertices corresponding to these elements whose order is the same but not adjacent in $P^\ast(G)$.
\end{proof}
\begin{thm}
     Let $G$ be a finite non-cyclic abelian group which is not a $p$-group, then $\kappa$($P^\ast(G)$) $\leq$ $\mid S\mid -\mid r\mid$, where $S$ is the set of all prime order element of $G$. 
\end{thm}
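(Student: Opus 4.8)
The plan is to exhibit an explicit vertex cut of $P^\ast(G)$ of size $|S|-|r|$; since the vertex connectivity of any graph is at most the size of any vertex cut, this yields the stated inequality. (Note $P^\ast(G)$ is connected by Corollary~\ref{cccartGKn}, so $\kappa(P^\ast(G))$ is meaningfully defined.) Writing $G\cong C_{p_1^{n_1}}\oplus\cdots\oplus C_{p_k^{n_k}}$ as in Lemma~\ref{Na}, non-cyclicity of $G$ forces some prime to repeat among the $p_i$; fix such a prime $q$ and let $r$ be the set of elements of $G$ of order exactly $q$, so that $r\subseteq S$, $r\neq\varnothing$, and $|S|-|r|$ is precisely the number of prime-order elements whose order is different from $q$. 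Arguing as in Lemma~\ref{Na}, applied to the non-cyclic Sylow $q$-subgroup, we obtain two vertices $a,b\in r$ that are non-adjacent in $P^\ast(G)$; by Lemma~\ref{DiviReg} we have $\langle a\rangle\cap\langle b\rangle=\{e\}$, and since $\langle a,b\rangle\cong C_q\times C_q$ is non-cyclic it is contained in no cyclic subgroup, so a short case check on the definition of $P(G)$ shows that $a$ and $b$ have no common neighbour.

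I would then take the cut to be $T=S\setminus r$, which has $|S|-|r|$ elements, and show that $P^\ast(G)-T$ is disconnected with $a$ and $b$ lying in distinct components. After deleting $T$, the only surviving prime-order vertices are those of order $q$. Group the surviving vertices according to their ``$q$-part'': to each vertex $x$ with $q\mid O(x)$ attach the unique subgroup of order $q$ inside the cyclic group $\langle x\rangle$. Using Lemma~\ref{DiviRegDom} together with the definition of $P(G)$, I would verify that $P^\ast(G)-T$ contains no edge between two vertices attached to \emph{different} order-$q$ subgroups: any element producing such an edge is forced to have prime order different from $q$, and hence to have been deleted with $T$. Since $a$ and $b$ are attached to the distinct subgroups $\langle a\rangle$ and $\langle b\rangle$, they end up in different components, and $P^\ast(G)-T$ is disconnected.

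Hence $\kappa(P^\ast(G))\le|T|=|S|-|r|$ by the definition of vertex connectivity. The delicate step, and the one I expect to be the main obstacle, is the middle one: the absence of a common neighbour of $a$ and $b$ is far from sufficient, so one must rule out every longer path between the two $q$-blocks that runs through composite-order vertices surviving in $P^\ast(G)-T$. Carrying this out demands a careful order-by-order inspection of which composite-order elements remain and of how the cyclic subgroups they generate meet the Sylow $q$-subgroup; I expect this bookkeeping, rather than any single clever device, to carry the real weight of the argument.
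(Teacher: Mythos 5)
Your reading of the underspecified set $r$ as the order-$q$ elements for the repeated prime $q$, and your plan of exhibiting $T=S\setminus r$ as an explicit vertex cut, is essentially the paper's own strategy: the paper likewise invokes Lemma \ref{Na} to produce two non-adjacent order-$q$ vertices $a,b$ and then asserts that any path between two non-adjacent vertices of the same prime order must pass through a vertex of some \emph{other} prime order, so that deleting those vertices disconnects the graph. (The paper is internally inconsistent here --- the set it actually deletes is its own $r$, of size $|r|$, yet it concludes $\kappa\le|S|-|r|$ --- but your version is the reading consistent with the stated bound.)

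The step you flag as the ``main obstacle'' is, however, not merely delicate: it is false, so the gap is genuine and cannot be closed by bookkeeping. Take $G=C_2\oplus C_2\oplus C_{15}$ with $q=2$, $a=(1,0,0)$, $b=(0,1,0)$, so that $T$ consists of the elements of order $3$ or $5$. The vertex $z=(0,0,1)$ has composite order $15$ and survives the deletion, as do $(1,0,1)$ and $(0,1,1)$ of order $30$; since $(1,0,1)^{15}=a$, $(1,0,1)^{16}=z$, $(0,1,1)^{16}=z$ and $(0,1,1)^{15}=b$, the path $a\sim(1,0,1)\sim z\sim(0,1,1)\sim b$ lies entirely in $P^\ast(G)-T$. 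In fact every surviving vertex of this group reaches $z$ by a similar route, so $T$ is not a vertex cut at all. Your block decomposition by the order-$q$ subgroup of $\langle x\rangle$ does correctly show that there is no edge between vertices with different $q$-parts \emph{among vertices whose order is divisible by $q$}, but it silently omits the surviving vertices of composite order coprime to $q$ (such as $z$), and these are exactly the bridges between the blocks; note that even deleting all of $S\setminus\{a,b\}$ would not break the path above. The paper's proof has the same hole --- its key assertion is never justified and fails for this $G$ --- so neither argument establishes the theorem as stated; a correct proof would require a genuinely different cut, or the statement additional hypotheses.
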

\begin{proof}
    By lemma \ref{Na}, there exist two vertices corresponding to the same prime order element  $a,b\in P^\ast(G)$  which are not adjacent, and let us take two elements $a$ and $b$ whose order is $p$. By lemma \ref{cccart1cycle}, $c,d\in G\backslash\{e\}$ such that $cd=dc$ and $\gcd(\mid c\mid,\mid d\mid)=1$, then $c\sim cd\sim d$. All of the elements in this group commute with one another because it is an abelian group. Here $\gcd(\mid a\mid,\mid b\mid)\ne 1$, so we can give a path between these vertices $a$ and $b$. Since it is not a $p$-group, it has different prime order elements, let's take $q_i$ as another prime order element, now $gcd(\mid a\mid,\mid q_i\mid)=1$ then $a\sim aq_i\sim q_i$ and $\gcd(\mid q_i \mid,\mid b\mid)=1$ then $q_i \sim q_ib \sim b$. From this, we write $a \sim aq_i\sim q_i\sim q_ib\sim b$. If two vertices corresponding to the same prime order element are not adjacent, then it has to be connected only through a vertex that corresponds to another prime order element. So we can disconnect the graph if we remove all vertex corresponding to the remaining prime order element. As we assume $S$ is the set consisting all the prime order elements of group $G$, $r$ is equal to $S\backslash\{p_i\}$, the set of all $p$ order elements is denoted by $p_i$ and $r$$\subset$$S$,  from this $\kappa(P^\ast(G)) \leq \mid S \mid-\mid r\mid$.
\end{proof}
\begin{thm}\label{cen}
    Let $G$ be a finite cyclic group of order $n$. If $P^\ast(G)$ is connected, then the center of $P^\ast(G)$ forms a complete subgraph.
\end{thm}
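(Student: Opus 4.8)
The plan is to identify the center of $P^\ast(G)$ with the set of \emph{dominating} vertices (those adjacent to every other vertex), by showing that $rad(P^\ast(G))=1$, and then to observe that any two dominating vertices are automatically adjacent to one another.

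First I would dispose of the trivial cases: if $n\leq 2$ then $P^\ast(G)$ has at most one vertex and the statement holds vacuously, so assume $n\geq 3$. For such $n$, let $g$ be any generator of $G$; then $h\in\langle g\rangle$ for every $h\in G\setminus\{e\}$, so $g$ is adjacent in $P^\ast(G)$ to every other vertex, i.e.\ the eccentricity of $g$ is $1$. Since $P^\ast(G)$ has at least two vertices, $1$ is the least possible eccentricity, hence $rad(P^\ast(G))=1$. By the definition of the center recalled in the introduction, the center of $P^\ast(G)$ is therefore exactly the induced subgraph on the set $D$ of vertices of eccentricity $1$, that is, the set of vertices adjacent to all others.

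Next I would prove that $D$ induces a complete subgraph: if $u,v\in D$ with $u\neq v$, then by definition $u$ is adjacent to every vertex of $P^\ast(G)$, in particular to $v$; so every two vertices of the center are adjacent and the center is complete, as claimed. There is no real obstacle here; the only step needing care is the claim $rad(P^\ast(G))=1$, which rests on the two elementary facts that generators dominate $P^\ast(G)$ and that the graph is nontrivial.

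Finally I would add a remark describing the center explicitly through Lemma \ref{domcor}: a vertex $v$ lies in $D$ exactly when $O(v)\mid O(b)$ or $O(b)\mid O(v)$ for every $b\in G\setminus\{e\}$, i.e.\ when $O(v)$ is comparable, under divisibility, with every divisor of $n$. If $n$ is a prime power the divisors form a chain, so $D=G\setminus\{e\}$ and in fact $P^\ast(G)=K_{n-1}$ is already complete; if $n$ is not a prime power then $D$ is precisely the set of $\phi(n)$ generators of $G$, so the center is $K_{\phi(n)}$.
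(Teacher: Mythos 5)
Your proof is correct, and it is leaner than the paper's. The paper splits into two cases ($G$ a cyclic $p$-group versus not), and in the second case it invests most of its effort in identifying the center \emph{exactly}: it invokes Lemma \ref{domcor} and the existence of elements of distinct prime orders to argue that no non-generator can be adjacent to every vertex, so the center is precisely the $\phi(n)$ generators and hence $K_{\phi(n)}$. You sidestep that identification entirely: once you know some vertex (a generator) has eccentricity $1$, the radius is $1$, the center is the set $D$ of eccentricity-$1$ vertices, and any two vertices of eccentricity $1$ are adjacent to each other by definition --- so the center is complete no matter what $D$ turns out to be. This removes the case split and the dependence on Lemma \ref{domcor} from the main argument (your closing remark recovers the paper's exact description of $D$ as a bonus, and that description is correct). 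The only thing the paper's longer route ``buys'' is the explicit determination of the center, which is not actually needed for the statement being proved; your argument also makes transparent why the connectedness hypothesis is automatic for cyclic groups.
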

\begin{proof}
    In the context of a finite cyclic group $G$, we aim to establish this proof in two cases: one where $G$ is a cyclic  $p$-group and the other where it is not.\\
    \textbf{case(i)} If $G$ is a finite cyclic $p$-group, then $P^\ast(G)$ forms a complete graph, resulting in the same eccentricity for every vertex. Consequently, in this case, $P^\ast(G)$ serves as the center of $P^\ast(G)$.\\
    \textbf{case(ii)} Consider $G$ be a finite cyclic group of order $n$, which is not a $p$-group. Here $P^\ast(G)$ is connected and it is a cyclic group, since all vertex in $P^\ast(G)$ are adjacent to the vertices corresponding to the generator element, these vertices have minimum eccentricity and now we have to show that only vertices corresponding to the generator element in $P^\ast(G)$ are adjacent to every vertex. Suppose we take vertex $c\in$ $P^\ast(G)$ which corresponds to the non-generator element that is adjacent to every vertex in $P^\ast(G)$ and $a_i$ is some random vertices in $P^\ast(G)$ then by lemma \ref{domcor}, either $O(c) \mid O(a_i)$ or $O(a_i) \mid O(c)$. As it is not a $p$-group, there exist at least two vertices in $P^\ast(G)$ that correspond to distinct prime order elements, since this vertex does not correspond to a generator element, so, $O(c) \nmid O(a_i)$ and $O(a_i) \nmid O(c)$, therefore, it is a contradiction. In this case, only the vertices corresponding to generator elements are adjacent to every other vertex, and in a cyclic group, there are $\phi(n)$ generators, since the distance of these vertices corresponding to the generator is 1, the center of $P^\ast(G)$ is a complete graph of order $\phi(n)$.
\end{proof}
\begin{thm}
     For a finite non-cyclic abelian group $G$ with an order of $n$. Let $Z$ be a cyclic subgroup generated by $q_1,q_2\ldots ,q_n$, where $q_1, q_2, \ldots, q_n$ are distinct primes in $G$. If $P^\ast(G)$ is connected, then the center of $P^\ast(G)$ is $P^\ast(Z)$.
\end{thm}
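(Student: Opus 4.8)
The plan is to follow the strategy of Theorem~\ref{cen} and pin down exactly which vertices of $P^\ast(G)$ have minimum eccentricity, the goal being to show that this set is precisely $Z\setminus\{e\}$. I would start by fixing the combinatorics. Since $P^\ast(G)$ is connected and $G$ is non-cyclic abelian, Lemma~\ref{da} prevents $G$ from being a $p$-group, so $|G|$ has at least two distinct prime divisors $q_1,\dots ,q_n$; choose an element $a_i$ of order $q_i$ for each $i$. As $G$ is abelian the $a_i$ commute and have pairwise coprime orders, so $Z=\langle a_1,\dots ,a_n\rangle$ is cyclic of order $q_1\cdots q_n$, and $P^\ast(Z)$ is just the subgraph of $P^\ast(G)$ induced on $Z\setminus\{e\}$; note that $P^\ast(Z)$ is itself connected, and by Theorem~\ref{cen} its own center is a complete graph on the generators of $Z$.

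The first part of the argument is an eccentricity upper bound for vertices of $Z$: I would show there is a number $\rho$ with $d(z,a)\le\rho$ for every $z\in Z\setminus\{e\}$ and every $a\in G\setminus\{e\}$, and that this $\rho$ is the minimum eccentricity of $P^\ast(G)$. The mechanism is Lemma~\ref{cccart1cycle}: for each prime $q_j$, a vertex $z\in Z$ reaches an element of order $q_j$ in at most two steps --- in one step via the power $z^{|z|/q_j}\in\langle z\rangle$ when $q_j\mid |z|$, and via $z\sim za_j\sim a_j$ otherwise --- and symmetrically any $a\in G$ reaches a prime-order element quickly, while Lemma~\ref{cccart1cycle} again joins two prime-order elements of coprime order. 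Splicing these segments, and trimming with $\mathrm{diam}(P^\ast(G))\le 4$ from Theorem~\ref{leq}, yields the claimed bound; one then checks the bound is attained, i.e.\ that $\rho$ really is the radius.

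The second and harder part is to prove that every $a\in G\setminus Z$ has eccentricity strictly greater than $\rho$. The essential input is Lemma~\ref{Na}: $G$ contains two non-adjacent vertices $u,v$ of a common prime order, and by Lemma~\ref{domcor} (adjacency in $P^\ast(G)$ is governed by divisibility of orders) any $u$--$v$ path must pass through a vertex of a different prime order. If $a\notin Z$, then $\langle a\rangle$ is not the unique cyclic $Z$-subgroup of its order: either $|a|$ is divisible by the square of some $q_i$ --- so $a$ is one extra step away from the prime-order vertices it must use --- or $\langle a\rangle$ fails to contain the canonical copy of $C_{q_i}$ sitting inside $Z$. In each case I would produce a concrete witness $b$ (a well-chosen prime-order element of $Z$, or one of $u,v$) and argue, again via Lemma~\ref{domcor}, that no common neighbour of $a$ and $b$ can be routed through $Z$, so that $d(a,b)=\rho+1$. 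I expect the main obstacle to be exactly this step: making the eccentricity genuinely constant on $Z\setminus\{e\}$ while strictly larger off $Z$, which forces a careful finite case analysis on the prime factorization of $|a|$ and on which Sylow components of $G$ are non-cyclic.

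Finally, combining the two parts, the vertices of $P^\ast(G)$ of minimum eccentricity are exactly those of $Z\setminus\{e\}$, so the center of $P^\ast(G)$ equals the induced subgraph on $Z\setminus\{e\}$, which is $P^\ast(Z)$.
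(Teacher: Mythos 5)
Your strategy is the same as the paper's: identify the center with the set of minimum\-/eccentricity vertices, use Lemma~\ref{cccart1cycle} (plus a common prime-order neighbour when the orders are not coprime) to bound the eccentricity of the vertices of $Z\setminus\{e\}$, and use Lemma~\ref{Na} with the divisibility criterion of Lemma~\ref{domcor} to push every vertex outside $Z$ strictly further out. The genuine gap sits exactly where you flag ``the main obstacle,'' and it cannot be closed as sketched, because the eccentricity separation fails in both directions. First, the eccentricity is not constant on $Z\setminus\{e\}$: the two non-adjacent vertices of equal prime order $q$ supplied by Lemma~\ref{Na} can be chosen so that one of them is the order-$q$ generator of $Z$ itself, and that vertex is then at distance $4$ from the other copy (any common neighbour, or any length-$3$ path, would force a cyclic subgroup containing $C_q\oplus C_q$). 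So the very lemma you use to expel non-$Z$ vertices from the center also expels part of $Z$. Second, the first branch of your dichotomy for $a\notin Z$ is wrong: if $q_i^2$ divides $|a|$ but the Sylow $q_i$-subgroup of $G$ is cyclic, then $\langle a\rangle$ contains the unique subgroup of order $q_i$, so $a$ is \emph{adjacent} to the canonical order-$q_i$ vertices rather than ``one extra step away,'' and no witness $b$ with $d(a,b)>\rho$ exists (the order-$9$ elements of $C_2\oplus C_2\oplus C_9$ have eccentricity $2$ while $Z\cong C_6$). You also never pin $\rho$ down to $2$; the spliced paths you describe only give a bound of about $4$.

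A single example defeats the whole scheme: take $G=C_2\oplus C_2\oplus C_3$ and $Z=\langle(1,0,0),(0,0,1)\rangle\cong C_6$. Then $d\bigl((1,0,0),(0,1,0)\bigr)=4$, the order-$6$ vertices of $Z$ have eccentricity $3$, and only the two order-$3$ vertices have eccentricity $2$; the center is $P^\ast(C_3)$, a proper subgraph of $P^\ast(Z)$. To be fair, the paper's own proof has the same defects --- it tacitly assumes the order-$q_i$ elements of $\langle a\rangle$ and $\langle b\rangle$ coincide whenever $q_i$ divides both orders, and it only examines elements divisible by the repeated prime --- so what you have run into is a flaw in the statement itself, not merely in your argument; no proof along these lines (or any other) can succeed without modifying the theorem.
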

\begin{proof}
     Consider $G$ to be a finite non-cyclic abelian group of order $n$. By theorem \ref{con}, if $G$ is a non-cyclic group of order $p^n$, then $P^\ast(G)$ is disconnected. If the group has an order of $p$, where $p$ is a prime number, it is inherently a cyclic group. Hence, the focus shifts to establishing that $G$ is a finite non-cyclic abelian group with an order of $n$ that does not qualify as a $p$-group. By theorem \ref{leq} $diam(P^\ast(G)) \leq 4$. According to the Fundamental Theorem of Finite Abelian Groups, every finite abelian group $G$ can be represented as being isomorphic to a group in the form of $C_{p1}^{n1} \oplus C_{p2}^{n2} \oplus...\oplus C_{pk}^{nk}$, where $C_{pi}^{ni}$ are the cyclic group of prime power order. Here at least two $p_i$ must be equal, otherwise, it will result in a cyclic group. Within the direct product of a prime power ordered cyclic group, some distinct prime order will be there, let us take the prime order elements as $q_1, q_2, \ldots, q_n$, and we know that these are distinct primes, so in $G$, $q_1-1$ elements of order $q_1$, $q_2-1$ elements of order $q_2$,.., $q_n-1$ elements of order $q_n$ will be there.
     \par Let us take $a_i$ as some random elements in $G$, and $b_i$ denote an element of $G$ considering all possible combinations of $q_1, q_2, \ldots, q_n$. If $gcd(\mid a_i \mid,\mid b_i \mid)=1$, then by lemma \ref{cccart1cycle}, $a_i$$\sim$$a_ib_i$$\sim$$b_i$ and $gcd(\mid a_i\mid,\mid b_i \mid) \neq 1$ then it has some common prime divisor let us take $q_i$, so $q_i-1$ elements of order $q_i$ is contained in both the cyclic subgroup generated by $a_i$ and $b_i$ respectively, so $d(a_i,b_i) \leq 2$. Now, we have to show that only the vertex corresponding to these elements has eccentricity 2. Within the direct product of a prime power ordered cyclic group, we know that some common prime is there or else it will give a cyclic group. Let us name the common prime as $p_i$ and let us take $c_i$ as the element of $G$ which has prime divisor $p_i$. By lemma \ref{Na}, there exist at least two vertices corresponding to the same prime order element in $P^\ast(G)$ that are not adjacent. The distance between these two vertices corresponding to the prime order element is 4. Every $c_i$ has a prime divisor $p_i$. If we consider any vertex $c_i$ in $P^\ast(G)$, we know that $p_i$ is a common prime. Therefore, there are at least two vertices with the same prime order elements that are not adjacent to each other. From these two vertices, $c_i$, is adjacent to only one vertex and the distance between $c_i$ and another vertex corresponds to prime order element $p_i$, by theorem 3 $c_i\sim c_iq_i\sim q_i\sim q_ip_i\sim p_i$, $d(c_i,p_i)=4$ and if $c_i$ have $q_i$ order element then $c_i \sim q_i\sim q_ip_i\sim p_i$, so here $d(c_i,p_i)=3$, therefore $d(c_i,p_i) \geq 3$, so the vertices correspond to remaining element have eccentricity at least 3.
     \par Now we collect all element vertices which have eccentricity 2, thus, every possible combination of  $q_1,q_2,\ldots,q_n$ can be found in $Z$, the cyclic subgroup formed by $q_1,q_2,\ldots, q_n$. We know that only these vertices are known to have eccentricity 2. As a result, $P$$^\ast$($Z$) is the centre of $P$$^\ast$($G$).
\end{proof}
\section{Planarity}
\begin{lem}\label{CS}
    If $G$ is a finite group of order n and $p$ is the prime that divides the order of $G$ then $P($G$)$ has a K$_p$ subgraph.
\end{lem}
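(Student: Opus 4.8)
The plan is to locate a cyclic subgroup of order $p$ inside $G$ and argue that its vertex set is a clique in $P(G)$. First I would invoke Cauchy's theorem: since $p$ is a prime dividing $|G| = n$, there exists an element $g \in G$ with $O(g) = p$. Set $H = \langle g \rangle$, so that $|H| = p$ and $H$ is cyclic. Note that Cauchy's theorem is needed here in full generality; it holds for every finite group, not merely for abelian ones, so nothing in the hypothesis is wasted.

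Next I would show that the $p$ vertices of $H$ induce a complete subgraph of $P(G)$. Take any two distinct $a,b \in H$. Every element of $H$ has order $1$ or $p$, since the order of an element divides $|H| = p$; hence trivially either $O(a) \mid O(b)$ or $O(b) \mid O(a)$. As $H$ is cyclic, Lemma \ref{domcor} (applied with $G$ replaced by $H$) gives that $a$ and $b$ are adjacent in $P(H)$, and adjacency in $P(H)$ forces adjacency in $P(G)$ because the cyclic subgroup generated by $a$ is the same whether computed in $H$ or in $G$. Alternatively one can argue directly: if one of $a,b$ is the identity it lies in the cyclic subgroup generated by the other, and if both are non-identity then each generates all of $H$ by primality of $p$, so $a \in \langle b \rangle$.

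Consequently all $\binom{p}{2}$ pairs of vertices of $H$ are edges of $P(G)$, i.e.\ $P(G)$ contains $K_p$ as a subgraph (indeed as the induced subgraph on $H$), which is the claim. I do not anticipate a genuine obstacle: the only steps requiring any care are the appeal to Cauchy's theorem and the observation that adjacency within $H$ is inherited by $P(G)$, both of which are routine.
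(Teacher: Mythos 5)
Your proof is correct and follows essentially the same route as the paper: apply Cauchy's theorem to obtain a cyclic subgroup of order $p$, observe that every non-identity element of it generates the whole subgroup (so these vertices are pairwise adjacent) and that the identity is adjacent to all of them, yielding $K_p$. The optional detour through Lemma \ref{domcor} is unnecessary but harmless; your direct argument is exactly the paper's.
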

\begin{proof}
    By Cauchy theorem, $G$ has an element of order $p$   \cite{gallian2021contemporary}. Hence $\langle p\rangle=\{p^1,p^2,..,p^p\}$, and the cyclic subgroup generated by all the elements except identity in that cyclic subgroup generated by $p$ are equal, so, all vertex corresponding to these elements are pairwise adjacent in $P(G)$, since this group is finite, every cyclic subgroup generated by each element in $G$ has the identity element as a member, therefore, every vertex are adjacent to the vertex corresponding to the identity element, this implies that $P(G)$ has $K_p$ subgraph.
\end{proof}
\begin{thm}
     Consider a finite cyclic group $G$ of order $n$. The proper power graph $P^\ast(G)$ is non-planar if and only if $n \geq 7$.
\end{thm}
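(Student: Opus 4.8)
The plan is to treat the two directions separately, reducing planarity in each case to the presence or absence of a $K_5$ subgraph; $K_{3,3}$ never enters the picture because the graphs in question are either complete or too small. The key structural fact I would record first is that in the cyclic group $G$ every generator $g$ is a \emph{universal} vertex of $P^\ast(G)$: since $\langle g\rangle=G$, every non-identity element lies in $\langle g\rangle$, so $g$ is adjacent in $P^\ast(G)$ to all other vertices. In particular the $\phi(n)$ generators are pairwise adjacent and span a clique of size $\phi(n)$.

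For the direction $n\geq 7\Rightarrow P^\ast(G)$ non-planar, I would combine the universal-vertex observation with the elementary fact that $\phi(n)\geq 4$ for every $n\geq 7$; this holds because $\phi(n)$ is even for $n\geq 3$, so $\phi(n)\leq 3$ forces $\phi(n)\in\{1,2\}$, hence $n\in\{1,2,3,4,6\}$. If $n$ is prime, then $P^\ast(G)\cong K_{n-1}$ with $n-1\geq 6$, which already contains $K_5$. If $n$ is composite, then $G$ contains an element $x$ whose order is a proper nontrivial divisor of $n$ (e.g.\ a prime $p\mid n$ with $p<n$); such an $x$ is neither the identity nor a generator, so $x$ together with any four of the generators induces $K_5$ in $P^\ast(G)$. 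In either case $P^\ast(G)$ contains $K_5$, hence a subdivision of $K_5$, and is therefore non-planar by Kuratowski's theorem.

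For the converse I would show that $n\leq 6$ implies $P^\ast(G)$ is planar. Then $P^\ast(G)$ has at most $n-1\leq 5$ vertices. A simple graph on at most five vertices cannot contain a subdivision of $K_{3,3}$ (which has six branch vertices) and cannot contain a subdivision of $K_5$ other than $K_5$ itself, so by Kuratowski's theorem such a graph is planar unless it \emph{equals} $K_5$. It then remains only to check that $P^\ast(G)\not\cong K_5$ for $n\leq 6$: for $n\leq 5$ there are at most four vertices, and for $n=6$ the unique element of order $2$ and an element of order $3$ are non-adjacent by Lemma \ref{domcor}. Hence $P^\ast(G)$ is planar for every $n\leq 6$.

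The only delicate point is the bookkeeping in the case $n\geq 7$: one must justify $\phi(n)\geq 4$ and confirm that a suitable fifth vertex (non-identity and non-generator) is available for every composite $n\geq 7$. The parity argument for $\phi$ handles the first, and the existence in a cyclic group of an element of any prescribed divisor order handles the second, so no genuine computation is needed; everything else is immediate from the universal-vertex observation and Kuratowski's theorem.
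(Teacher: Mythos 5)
Your proof is correct, and it rests on the same two pillars as the paper's: the $\phi(n)$ generators of $G$ are universal vertices of $P^\ast(G)$, and $\phi(n)\geq 4$ once $n\geq 7$. The one substantive difference is the forbidden subgraph you exhibit in the forward direction: the paper takes three generators against three further vertices to produce a $K_{3,3}$ subgraph, whereas you take four generators plus one additional vertex (or simply $K_{n-1}$ when $n$ is prime) to produce $K_5$. Both witnesses are valid and cost essentially the same; yours has the mild advantage of needing only five vertices rather than six, and your parity argument for $\phi(n)\geq 4$ makes explicit a claim the paper merely asserts. In the converse direction your argument is actually tighter than the paper's: the paper simply states that the five-vertex graph $P^\ast(C_6)$ "does not contain $K_5$ and $K_{3,3}$," while you observe that any graph on at most five vertices is automatically planar unless it \emph{is} $K_5$, and then rule that out for $n=6$ by noting that the order-$2$ and order-$3$ elements are non-adjacent by Lemma \ref{domcor}. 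No gaps; the bookkeeping points you flag (evenness of $\phi$ for $n\geq 3$, existence of an element of each divisor order in a cyclic group) are exactly the right ones and are correctly resolved.
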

\begin{proof}
    We know that $G$ has $\phi(n)$ generators because it is a cyclic group of order $n$. Additionally, all vertex corresponding to these elements are adjacent to every vertex in $P^\ast(G)$. If $n\geq$7, then $\phi(n)\geq4$, therefore, if the order of the group is greater than 7, then in $P^\ast(G)$ has at least 4 generator element vertices. So in $P^\ast(G)$, we take three vertices corresponding to the generator element and any three random vertices, all vertex corresponding to the generator element are adjacent to every vertex, and the random vertices are adjacent to the vertices corresponding to the generator element. Therefore, it has $K_{3,3}$ by Kuratowski’s theorem  $P^\ast(G)$ is non-planar.
  \par The converse aspect of the theorem serves as conclusive proof for the planarity of $P^\ast(G)$ when $n \leq 6$. Specifically, if $n \leq 5$, then $P^\ast(G)$ forms a complete graph $K_{n-1}$. If $n=6$ then $P^\ast(G)$ will give a five vertices simple graph which is not a complete graph. So here, $P^\ast(G)$ does not contain $K_5$ and $K_{3,3}$. Therefore by Kuratowski’s theorem, it is planar.
\end{proof}
\begin{thm}
    Let $G$ be a finite $p$-group of exponents $p$ then $P^\ast(G)$ is planar if and only if $p\leq$5.
\end{thm}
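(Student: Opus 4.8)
The plan is to pin down the isomorphism type of $P^\ast(G)$ exactly, after which the statement becomes a one-line consequence of the planarity of $K_{p-1}$. Since $G$ has exponent $p$, every element $g$ satisfies $g^p = e$, so $O(g) \mid p$; hence every non-identity element has order exactly $p$, and every cyclic subgroup generated by a non-identity element has order $p$ and consists of that element together with its $p-1$ non-identity powers, all of which generate the same subgroup.

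First I would determine the adjacency relation in $P^\ast(G)$. Let $a,b \in G \setminus \{e\}$. If $a \sim b$ then, say, $a \in \langle b\rangle$, so $\langle a\rangle \subseteq \langle b\rangle$; since $|\langle a\rangle| = |\langle b\rangle| = p$ (using Lemma \ref{DiviRegDom}), this forces $\langle a\rangle = \langle b\rangle$, and Lemma \ref{DiviReg} shows distinct order-$p$ cyclic subgroups meet only in $e$. Conversely, if $\langle a\rangle = \langle b\rangle$ then trivially $a \sim b$. Thus two vertices are adjacent if and only if they generate the same (order-$p$) subgroup. Listing the distinct cyclic subgroups $H_1,\dots,H_t$ of order $p$ — there are $t = (|G|-1)/(p-1) \geq 1$ of them, pairwise intersecting in $\{e\}$ — we conclude that $P^\ast(G)$ is precisely the disjoint union of $t$ copies of $K_{p-1}$, each induced on the $p-1$ non-identity elements of some $H_i$.

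Now the planarity question reduces to $K_{p-1}$: a disjoint union of graphs is planar exactly when each component is, so $P^\ast(G)$ is planar if and only if $K_{p-1}$ is planar. If $p \leq 5$ then $p-1 \leq 4$, and $K_4$ (hence $K_1,K_2,K_3$) is planar, so $P^\ast(G)$ is planar. If $p \geq 7$ then $p-1 \geq 6$, so each component contains $K_5$ as a subgraph (this clique can also be quoted from Lemma \ref{CS} applied inside $H_i$); by Kuratowski's theorem $K_5$, and therefore $K_{p-1}$ and therefore $P^\ast(G)$, is non-planar. This proves both implications.

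There is no genuinely hard step here once the disjoint-union-of-cliques structure is noticed; the only point needing care is the claim that no edge of $P^\ast(G)$ joins elements lying in two different order-$p$ subgroups, which is exactly where Lemma \ref{DiviReg} is invoked, together with checking $t \geq 1$ so that $P^\ast(G)$ is non-empty.
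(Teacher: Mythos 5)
Your proof is correct and takes essentially the same approach as the paper: both reduce the problem to the fact that $P^\ast(G)$ is a disjoint union of copies of $K_{p-1}$ and then apply Kuratowski's theorem, with non-planarity for $p\geq 7$ coming from the $K_5$ inside $K_{p-1}$. The only difference is that you derive the clique decomposition from first principles (adjacency of two order-$p$ elements forces $\langle a\rangle=\langle b\rangle$), whereas the paper cites \cite{doostabadi2015connectivity} for it; the planarity argument itself is identical.
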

\begin{proof}
    Since $G$ is a finite $p$-group of exponents $p$ by \cite{doostabadi2015connectivity}, $P^\ast(G)$ is the union of the complete graph of order $p -1$. If $P^\ast(G)$ is planar then $p\leq 5$. Suppose, $p>5$ then the next prime is 7, so, $P^\ast(G)$ will be the union of $K_6$ and similarly $p>7$, then $P^\ast(G)$ will be the union of $K_{p-1}$, therefore, $P^\ast(G)$ has $K_5$ subgraph, hence, by Kuratowski’s theorem, it is established as non-planar. This poses a contradiction, leading to the conclusion that $P^\ast(G)$ is planar only when $p \leq 5$.
    \par Conversely when $p \leq 5$, $P^\ast(G)$ can be expressed as the union of complete graphs with orders less than 4. Applying Kuratowski’s theorem, we confirm the planarity of $P^\ast(G)$ since it does not contain both $K_5$ and $K_{3,3}$.
\end{proof}
\begin{thm}
     If $G$ is a finite non-cyclic abelian group of order $p^n$ where $p\geq 7$, then $P^\ast(G)$ is non-planar.
\end{thm}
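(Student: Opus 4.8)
The plan is to exhibit a copy of $K_5$ inside $P^\ast(G)$ and then quote Kuratowski's theorem. Since $G$ is a finite non-cyclic abelian $p$-group, the Fundamental Theorem of Finite Abelian Groups gives $G \cong C_{p^{a_1}} \oplus \cdots \oplus C_{p^{a_k}}$ with $k \geq 2$, so in particular $G$ contains a subgroup isomorphic to $C_p \oplus C_p$; even more cheaply, Cauchy's theorem \cite{gallian2021contemporary} (or simply the fact that $G$ is a $p$-group) guarantees an element of order $p$. This is all the group-theoretic input needed.

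First I would fix an element $a \in G$ of order $p$ and look at the cyclic subgroup $\langle a\rangle$, which has order $p$. Each of the $p-1$ non-identity powers $a, a^2,\ldots,a^{p-1}$ has order $p$ and lies in $\langle a\rangle$, so by Lemma \ref{DiviReg} they all generate the \emph{same} cyclic subgroup $\langle a\rangle$; hence $a^i \in \langle a^j\rangle$ for every pair $i,j$, and the $p-1$ vertices $a, a^2,\ldots,a^{p-1}$ are pairwise adjacent in $P^\ast(G)$. This is exactly the mechanism of Lemma \ref{CS}, transplanted to the proper power graph, where deleting the identity removes only one vertex of the clique. Thus $P^\ast(G)$ contains $K_{p-1}$ as a subgraph.

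To finish, since $p \geq 7$ we have $p-1 \geq 6 > 5$, so $K_{p-1}$ contains $K_5$, and therefore so does $P^\ast(G)$. By Kuratowski's theorem \cite{west2001introduction}, $P^\ast(G)$ is non-planar. I do not expect a real obstacle here: the only points needing a sentence of justification are that a subgroup of prime order exists (immediate for a $p$-group) and that the non-identity elements of $\langle a\rangle$ really do form a clique in $P^\ast(G)$ (immediate from the power-graph definition together with Lemma \ref{DiviReg}). If one prefers to make the non-cyclic hypothesis do visible work, an alternative is to restrict attention to the subgroup $C_p\oplus C_p$, whose proper power graph is, by \cite{doostabadi2015connectivity}, a disjoint union of $p+1$ copies of $K_{p-1}$ — each of which already contains $K_5$ when $p\geq 7$ — but the single-cyclic-subgroup argument is the shortest route and the non-cyclicity of $G$ is then only there to match the statement's hypotheses.
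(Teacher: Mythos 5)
Your argument is correct and follows essentially the same route as the paper: invoke Lemma \ref{CS} (Cauchy plus the fact that the non-identity elements of a cyclic subgroup of prime order $p$ form a clique) to get a $K_{p-1}$ subgraph of $P^\ast(G)$, then note $p-1\geq 6$ and apply Kuratowski. Your added remark that the non-cyclicity hypothesis plays no real role is accurate but does not change the substance.
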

\begin{proof}
    Here, $G$ is a non-cyclic abelian group of order $p^n$. By lemma \ref{CS} if any prime $p$ divides the order of the group then in $P^\ast(G)$ it has $K_{p-1}$ subgraph. Therefore, if $p=7$ then $P^\ast(G)$ has a $K_6$ subgraph, similarly, if $p>7$ then $P^\ast(G)$ has a $K_{P-1}$ subgraph, so in every graph $K_5$ subgraph will exist, therefore, it is non-planar by Kuratowski’s theorem.
\end{proof}
\begin{thm}
     Let $G$ be a finite group of the form  $C_q \oplus C_{p1} \oplus C_{p2} \oplus...\oplus C_{pk}$ where $p$ and $q$ are distinct prime and if $p$ and $q$ are 2 and 3 then $P$$^\ast$($G$) is planar.
\end{thm}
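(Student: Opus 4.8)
The plan is to pin down the isomorphism type of $G$, describe $P^\ast(G)$ combinatorially in each case, and then produce a concrete planar drawing. Writing $\{p,q\}=\{2,3\}$, the stated form forces $G\cong C_3\oplus C_2^{\,k}$ or $G\cong C_2\oplus C_3^{\,k}$ for some $k\ge 1$ (the case $k=0$ being trivial). In both cases the order of every element lies in $\{1,2,3,6\}$, since one direct factor has exponent $p\in\{2,3\}$ and the other is cyclic of order $q\in\{2,3\}$; this is exactly what makes the primes $2,3$ special, and it is the starting point for everything below.

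First take $G\cong C_3\oplus C_2^{\,k}$ and write $C_3=\{1,g,g^{2}\}$. Then $G$ has precisely two elements of order $3$, namely $g=(g,\mathbf 0)$ and $g^{2}=(g^{2},\mathbf 0)$; for each nonzero $v\in C_2^{\,k}$ it has one element $t_v=(1,v)$ of order $2$ and two elements $u_v=(g,v),\,u'_v=(g^{2},v)$ of order $6$. Listing the cyclic subgroups directly (each has order $1,2,3$ or $6$) and invoking Lemma~\ref{DiviRegDom}, one checks that in $P^\ast(G)$ the edges are exactly: $g\sim g^{2}$; $g$ and $g^{2}$ joined to every $u_v$ and every $u'_v$ but to no $t_v$; and, for each $v$, the triangle on $\{t_v,u_v,u'_v\}$ (there are no edges between distinct gadgets, so in particular $u_v\not\sim u_w$ and $t_v\not\sim t_w$ for $v\neq w$). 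Thus $\{g,g^{2},u_v,u'_v\}$ induces a $K_4$ for every $v$, all these copies of $K_4$ share exactly the edge $gg^{2}$, and each $t_v$ is a degree-$2$ vertex attached to the edge $u_vu'_v$. To embed this in the plane, begin with a planar embedding of $K_{2,m}$, $m=2(2^{k}-1)$, with parts $\{g,g^{2}\}$ and $\{u_{v_1},u'_{v_1},u_{v_2},u'_{v_2},\dots\}$, chosen so that $g,g^{2}$ lie on the outer face and each pair $u_{v_i},u'_{v_i}$ is consecutive on the large side, bounding a common quadrilateral face $g\,u_{v_i}\,g^{2}\,u'_{v_i}$. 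Insert the edge $gg^{2}$ in the outer face; inside each such quadrilateral insert the edge $u_{v_i}u'_{v_i}$, splitting it into two triangles, and place $t_{v_i}$ inside one of them joined to $u_{v_i}$ and $u'_{v_i}$. The result is a planar embedding of $P^\ast(G)$.

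Now take $G\cong C_2\oplus C_3^{\,k}$ with $a$ generating $C_2$, and let $s=(a,\mathbf 0)$ be the unique involution. For each order-$3$ subgroup $\langle v\rangle\le C_3^{\,k}$ there are the order-$3$ elements $(1,v),(1,2v)$ and the order-$6$ elements $(a,v),(a,2v)$; computing the cyclic subgroups (all of order $\le 6$) and using Lemma~\ref{DiviRegDom}, these four vertices induce a $K_4$, the vertex $s$ is adjacent to the two order-$6$ ones and to no order-$3$ vertex, and gadgets coming from distinct subgroups are vertex-disjoint with no edges between them. Hence $P^\ast(G)-s$ is a disjoint union of copies of $K_4$, so every block of $P^\ast(G)$ has at most five vertices and at most eight edges. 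A block of at most eight edges cannot contain a subdivision of $K_5$ (which is $2$-connected with at least ten edges), and a block of at most five vertices cannot contain a subdivision of $K_{3,3}$ (which has six branch vertices); so by Kuratowski's theorem every block is planar, and a graph all of whose blocks are planar is planar. This settles this case.

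The adjacency bookkeeping in the two cases is routine; the genuine difficulty is the first case, where $P^\ast(G)$ is $2$-connected, so the block argument of the second case is unavailable and one must actually nest the copies of $K_4$ around the common edge $gg^{2}$ — the $K_{2,m}$-based construction above is precisely what achieves this. Finally, it is worth noting that the hypothesis $\{p,q\}=\{2,3\}$ is not superfluous: as in Lemma~\ref{CS}, any prime factor $r\ge 7$ of $\lvert G\rvert$ would give a $K_{r-1}\supseteq K_5$ in $P^\ast(G)$, and already a factor $5$ enlarges the element orders enough (to $10$ or $15$) that $P^\ast(G)$ acquires a $K_5$, destroying planarity — in line with the preceding planarity theorems.
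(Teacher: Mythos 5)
Your proposal is correct, and its structural analysis coincides with the paper's: both split into the two cases $C_3\oplus C_2^{\,k}$ and $C_2\oplus C_3^{\,k}$, and both arrive at the same adjacency description (in the first case, copies of $K_4$ on $\{g,g^2,u_v,u'_v\}$ glued along the common edge $gg^2$ with a pendant triangle vertex $t_v$ on each $u_vu'_v$; in the second, a union of $K_4$'s joined only through the unique involution). Where you genuinely diverge is in how planarity is concluded. The paper counts degrees and argues that no $K_5$ or $K_{3,3}$ \emph{subgraph} can be assembled, then invokes Kuratowski's theorem and supplements this with drawn figures; strictly speaking Kuratowski's criterion concerns \emph{subdivisions} of $K_5$ and $K_{3,3}$, so the paper's exclusion of the subgraphs alone leaves a (repairable) gap that the figures are implicitly papering over. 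You instead exhibit planarity directly: in the first case by nesting the $K_4$'s inside consecutive quadrilateral faces of a planar $K_{2,m}$ and inserting $gg^2$ in the outer face, and in the second by observing that every block has at most five vertices and eight edges, hence can contain no subdivision of $K_5$ (ten edges required) or $K_{3,3}$ (six branch vertices required), and that a graph is planar when all its blocks are. Your route is constructive and airtight where the paper's is a forbidden-subgraph count; the paper's version is shorter and is backed by explicit pictures of the two graphs. Your closing remark on why $\{p,q\}=\{2,3\}$ is essential (a prime factor $5$ already forces a $K_5$ via the generators of a $C_{10}$ or $C_{15}$) is a useful observation not made in the paper's proof, though it is consistent with the surrounding planarity theorems.
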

\begin{proof}
    Here, $G$ is a finite abelian non-cyclic group which is not a $p$-group so by theorem \ref{con}, $P^\ast(G)$ is connected. \\
  \textbf{case(i)} Here, we take $p$=2 and $q$=3, so the group $G$ is like $C_3 \oplus C_2 \oplus C_2 \oplus...\oplus C_2$, in this group,  two three-order elements and $p^k$-1 two-order elements and the remaining are six-order elements. In this graph the vertex corresponding to two-order elements are not adjacent, in $P^\ast(G)$ two vertices $a,b$ are adjacent then $a\in \langle b\rangle$ or $b \in \langle a\rangle$, cyclic subgroup generated by two-order elements have only two elements in that subgroup one is identity and another is that element itself. And every cyclic subgroup generated by six-order elements has one identity, one two-order element, two three-order elements, and two six-order elements, in that, one is its inverse element and another is itself and the cyclic subgroup generated by its inverse element is equal, so,  the vertex corresponding to three-order elements are adjacent to every vertex corresponding to six-order elements and every vertex corresponding to two-order element is adjacent two vertices corresponding to six-order element. Hence the degree of every vertex corresponding to a two-order element is two and the degree of every vertex corresponding to six-order element is four.

\par In $P^\ast(G)$ to show that the graph does not have $K_5$ and $K_{3,3}$ we choose the vertices corresponding to three-order and six-order elements, we take the two vertices corresponding to the three-order element, these vertices are adjacent to all vertex corresponding to six-order element and we have to choose three more vertices corresponding to six-order element which are pairwise adjacent to each other, we know that every vertex corresponding to six-order element is adjacent to only one vertex corresponding to the six-order element, so, here we can't find three vertices corresponding to the six-order element that are pairwise adjacent, so, $K_5$ is not possible, similarly, for $K_{3,3}$ we need at least one vertex corresponding to six-order element which is adjacent to three vertices corresponding to six-order elements, so, $K_{3,3}$ is not possible by Kuratowski’s theorem, $P$$^\ast$($G$) is planar. In the below graph, $q_i$ is the vertices corresponding to the three-order element, $p_i$ is the vertices corresponding to the two-order element and $r_i$ is the vertices corresponding to the six-order element.
\begin{figure}[htb!]
    \centering
    \includegraphics[width=100mm]{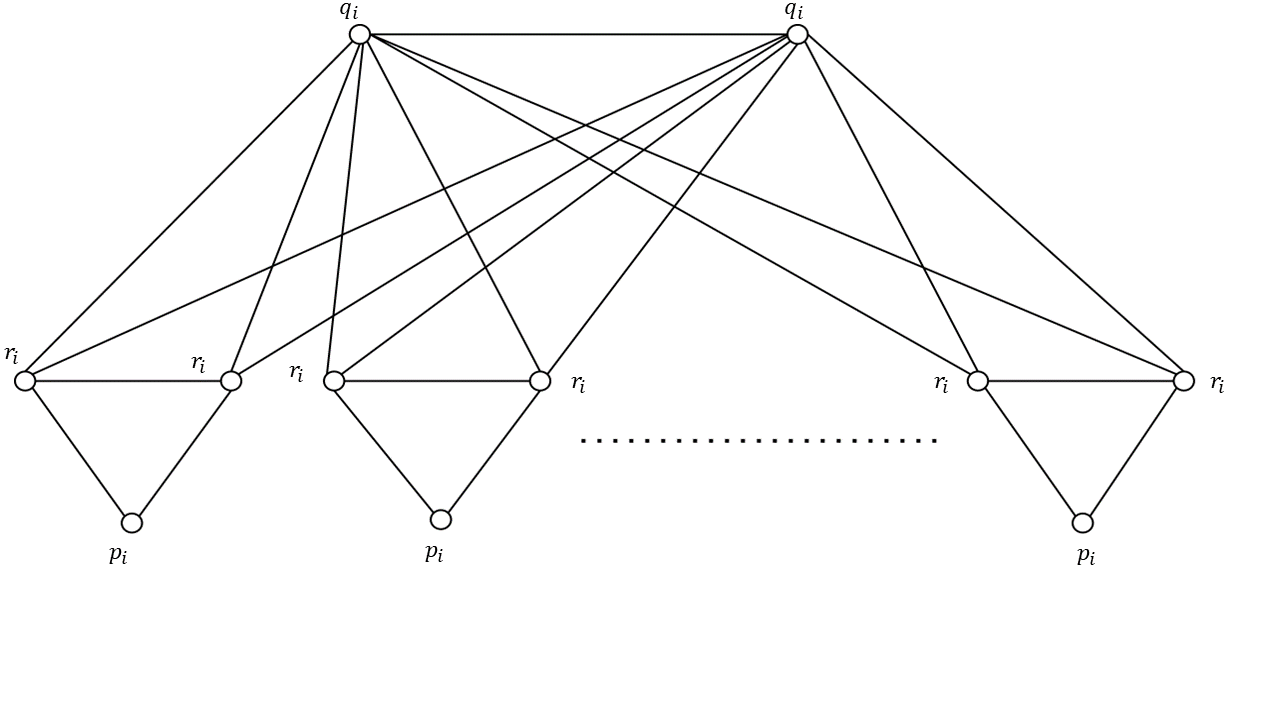}
    \caption{Proper Power Graph of $C_3 \oplus C_2 \oplus C_2 \oplus...\oplus C_2$}
    \label{fig-1}
\end{figure}

\textbf{case(ii)} Here we take $p$=3 and $q$=2, so the group $G$ is like $C_2 \oplus C_3 \oplus C_3 \oplus...\oplus C_3$, in this group one two-order element, $p^k$-1 three-order elements and remaining are six-order elements. So in $P^\ast(G)$, the vertex corresponding to the two-order element is adjacent to all vertices corresponding to the six-order element. Every vertex corresponding to the three-order elements is adjacent to one vertex corresponding to the three-order element and two vertices corresponding to the six-order element. Here every vertex corresponding to the three-order element has a degree three and every vertex corresponding to the six-order element has a degree four. Therefore, in this graph, only one vertex corresponding to the two-order element is adjacent to more than four vertices, and the remaining vertices corresponding to the six-order element are adjacent to one vertex corresponding to the six-order element and one vertex corresponding to the two-order element. Here $K_5$ is not possible as well as $k_{3,3}$ also, therefore, by Kuratowski’s theorem $P^\ast(G)$ is planar. In the below graph, $q_i$ is a vertex corresponding to the two-order element, $p_i$ is the vertices corresponding to the three-order element and $r_i$ is the vertices corresponding to the six-order element.  
\begin{figure}[htb!]
    \centering
    \includegraphics[width=100mm]{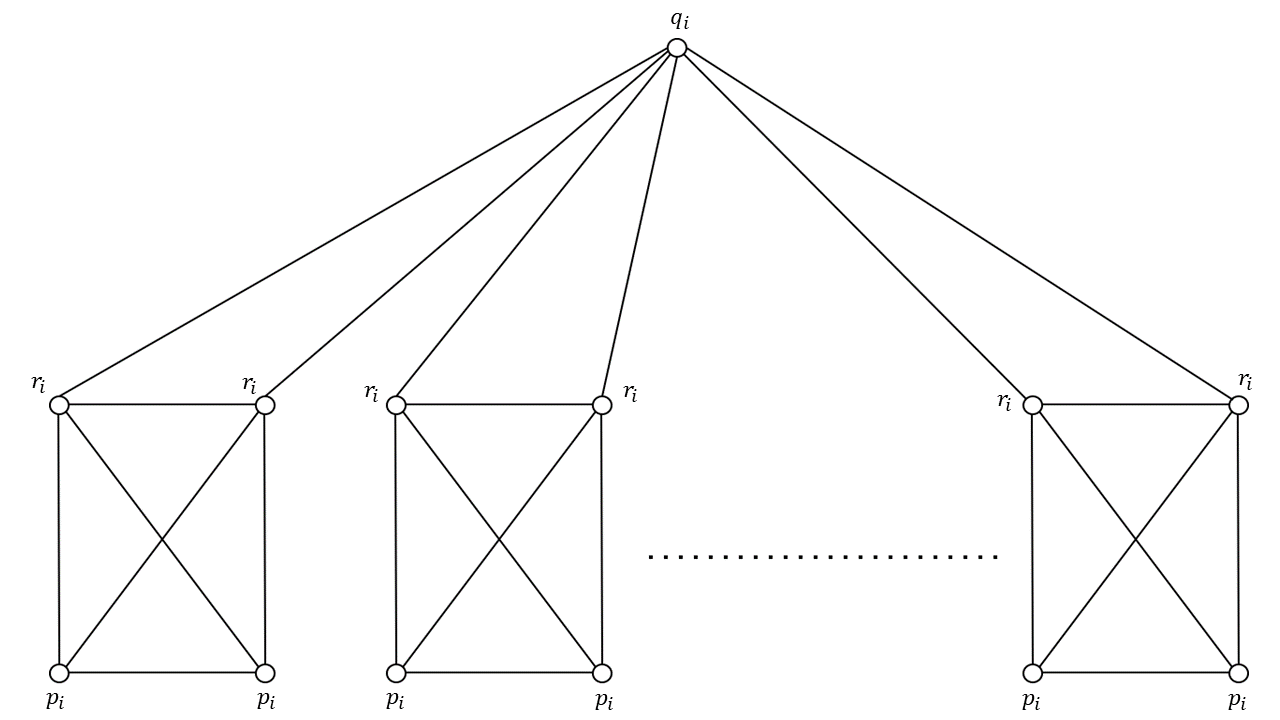}
    \caption{Proper Power Graph of $C_2 \oplus C_3 \oplus C_3 \oplus...\oplus C_3$}
    \label{fig-2}
\end{figure}
\end{proof}

\renewcommand{\baselinestretch}{0.1}

\bibliographystyle{ams}
\bibliography{bibtex}  
\end{titlepage}
\end{document}